\newcommand{\refcheckize}[1]{%
  \expandafter\let\csname @@\string#1\endcsname#1%
  \expandafter\DeclareRobustCommand\csname relax\string#1\endcsname[1]{%
    \csname @@\string#1\endcsname{##1}\@for\@temp:=##1\do{\wrtusdrf{\@temp}\wrtusdrf{{\@temp}}}}%
  \expandafter\let\expandafter#1\csname relax\string#1\endcsname
}
\newcommand\blfootnote[1]{%
	\begingroup
	\renewcommand\thefootnote{}\footnote{#1}%
	\addtocounter{footnote}{-1}%
	\endgroup
}
\providecommand{\keywords}[1]
{
	\small	
	\blfootnote{\textbf{\textit{Keywords:}} #1}
}
\providecommand{\subjclass}[1]
{
	\small	
	\blfootnote{\textbf{\textit{2020 Mathematics Subject Classification:}} #1}
}
\newtheorem{theorem}{Theorem}[section]
\newtheorem*{theorem*}{Theorem}
\newtheorem*{theoremY*}{Theorem Y}
\newtheorem*{theoremAB*}{Theorem AB}
\newtheorem{corollary}{Corollary}[section]
\newtheorem*{corollary*}{Corollary}
\newtheorem{proposition}{Proposition}[section]
\newtheorem{lemma}{Lemma}[section]
\newtheorem*{claim*}{Claim}
\theoremstyle{definition}
\theoremstyle{remark}
\newtheorem*{remark*}{Remark}
\newtheorem{example}{Example}[section]
\renewcommand{\Bbb}[1]{\mathbb{#1}}
\newcommand{\bbC}{{\Bbb C}}         
\newcommand{\bbN}{{\Bbb N}}         
\newcommand{\bbR}{{\Bbb R}}        
\newcommand{\bbRP}{{\Bbb RP}} 		
\newcommand{\cA}{{\cal A}}
\newcommand{\cB}{{\cal B}}
\newcommand{\cC}{{\cal C}}
\newcommand{\cG}{{\cal G}}
\newcommand{\cH}{{\cal H}}
\newcommand{\cI}{{\cal I}}
\newcommand{\cL}{{\cal L}}
\newcommand{\cU}{{\cal U}}
\newcommand{\cV}{{\cal V}}
\renewcommand{\ge}{\geq}
\newcommand{\bi}{{\overline {\imath}}}
\newcommand{\bj}{{\overline  {\jmath}}}
\newcommand{\ai}{{\overleftarrow {\imath}}}
\newcommand{\aj}{{\overleftarrow  {\jmath}}}
\DeclareMathOperator{\dimh}{\dim_H}
\DeclareMathOperator{\proj}{proj}
\title{Hausdorff measure of dominated planar self-affine sets with large dimension}
\author{Bal\'azs B\'ar\'any\footnote{BB acknowledges support from grant NKFI~K142169, and grant NKFI KKP144059 "Fractal geometry and applications" Research Group.} \\ Department of Stochastics \\ HUN-REN–BME Stochastics Research Group \\ Institute of Mathematics \\  Budapest University of Technology and Economics \\ M\H{u}egyetem rkp. 3., H-
	1111 Budapest, Hungary  \\ email: barany.balazs@ttk.bme.hu}
\date{\today}
\begin{document}

\frenchspacing
\maketitle

\begin{abstract}
In this paper, we investigate the Hausdorff measure of planar dominated self-affine sets at its affinity dimension. We show that the Hausdorff measure being positive and finite is equivalent to the K\"aenm\"aki measure being a mass distribution. Moreover, under the open bounded neighbourhood condition, we will show that the positivity of the Hausdorff measure is equivalent to the projection of the K\"aenm\"aki measure in every Furstenberg direction being absolutely continuous with bounded density. This also implies that the affinity and the Assouad dimension coincide. We will also provide  examples for both of the cases when the Hausdorff measure is zero and positive.
\end{abstract}

\keywords{self-affine sets, Hausdorff measure, K\"aenm\"aki measure}

\subjclass{Primary 28A80 Secondary 28A78}

%
%

\section{Introduction} \label{sec:intro}

Let $\cA$ be a finite set of indices, and let $\Phi=\{f_i(x)=A_ix+t_i\}_{i\in\cA}$ be a planar iterated function system (IFS) of affinities on $\bbR^d$ such that $\|A_i\|<1$ for every $i\in\cA$ and $|\det(A_i)|\neq 0$. Hutchinson \cite{Hutchinson1981} showed that there exists a unique non-empty compact set $X$ invariant with respect to $\Phi$, i.e.
$$
X=\bigcup_{i\in\cA}f_i(X).
$$
We call $X$ \textit{self-affine set}, and if the maps are similarities, that is, $A_i=\lambda_iO_i$, where $\lambda_i\in(0,1)$ and $O_i\in O(\bbR,d)$, then we call $X$ \textit{self-similar}. Throughout the paper, we will restrict our attention to the planar, $d=2$ case.

In the last decades, considerable attention has been paid to the geometric properties of such fractal sets, especially, to the Hausdorff dimension and measure. Let us define the Hausdorff measure, content and dimension for later purposes. For $\delta>0$ and $s\geq0$, set
$$
\cH^s_\delta(A)=\inf\{\sum_i|U_i|^s:A\subseteq\bigcup_iU_i\ \&\ |U_i|<\delta\}
$$
the $\delta$-approximation of the Hausdorff measure, { where $|U|$ denotes the diameter of the set $U$.} In particular, when $\delta=\infty$, we call the quantity $\cH^s_\infty(A)=\inf\{\sum_i|U_i|^s:A\subseteq\bigcup_iU_i\}$ the Hausdorff content. Let
$$
\cH^s(A)=\lim_{\delta\to0}\cH^s_\delta(A)\text{ and }\dimh A=\inf\{s>0:\cH^s(A)=0\}=\inf\{s>0:\cH^s_\infty(A)=0\}.
$$
be the Hausdorff measure and dimension. For basic properties, we direct the reader to the book of Falconer \cite{Falconer1990}.

Hutchinson \cite{Hutchinson1981} studied the Hausdorff dimension and measure of self-similar sets. More precisely, he showed that $\dimh(X)\leq s_0$, where $s_0$ is called the similarity dimension and it is the unique solution of the equation $\sum_{i\in\cA}\lambda_i^{s_0}=1$. Furthermore, if the IFS $\{f_i(x)=\lambda_iO_ix+t_i\}_{i\in\cA}$ satisfies the open set condition (OSC) then $0<\cH^{s_0}(X)<\infty$ and, in particular, $\dimh X=s_0$. For a precise definition of the OSC, see \cite{Hutchinson1981}. Later Bandt, Graf \cite{BandtGraf1992} and Schief \cite{Schief1994} showed that $0<\cH^{s_0}(X)<\infty$ is equivalent to the open set condition, and they gave several further equivalent characterisations.

Even if the OSC fails, and so, the $s_0$-dimensional Hausdorff measure is zero, typically the Hausdorff dimension does not drop with respect to the similarity dimension. Hochman \cite{Hochman2014} showed that if the IFS of similarities on the line satisfies the exponential separation condition then $\dimh X=\min\{1,s_0\}$. Later, Hochman \cite{hochman2017selfsimilarsetsoverlapsinverse} extended this result for higher dimensions.

Our knowledge on the more general self-affine situation is considerably more restrictive. Falconer \cite{Falconer1988} generalised the concept of the similarity dimension to the affine regime. For a $d\times d$ matrix $A$, denote $\alpha_i(A)$ the $i$th singular value of $A$. For $s\geq0$, let us define the singular value function as
$$
\varphi^s(A)=\begin{cases}
	\alpha_1(A)\cdots\alpha_{\lfloor s\rfloor}(A)\alpha_{\lceil s\rceil}(A)^{s-\lfloor s \rfloor} & \text{ if }0\leq s\leq d\\
	(|\det(A)|)^{s/d} & \text{ if }s>d.
\end{cases}
$$
We define the \textit{affinity dimension} of the IFS $\Phi=\{f_i(x)=A_ix+t_i\}_{i\in\cA}$ by
$$
s_0=\inf\left\{s>0:\sum_{n=1}^\infty\sum_{i_1,\ldots,i_n\in\cA}\varphi^s(A_{i_1}\cdots A_{i_n})<\infty\right\}.
$$
Falconer \cite{Falconer1988} showed that $s_0$ is always an upper bound for the dimension of the attractor and Solomyak \cite{Solomyak1998} proved that if $\|A_i\|<1/2$ then $\min\{d,s_0\}$ equals to the dimension for Lebesgue typical choice of translation parameters.

Unlike the self-similar case, the dimension of the attractor might drop with respect to the affinity dimension even if there is some kind of separation between the cylinder sets, like OSC. Bedford \cite{Bedford1984} and McMullen \cite{McMullen1984} studied certain type of self-affine carpets, which were later generalised by Gatzouras and Lalley \cite{GatzourasLalley} and Bara\'nski \cite{baranski}, where the matrices $A_i$ where diagonal and the set had a certain alignment structure. They gave a formula for the box-counting and Hausdorff dimension of the attractor, which is strictly smaller than the affinity dimension in most of the cases.

A possible reason for the dimension drop is the alignment structure of the set. One can get rid with it even if the matrices are diagonal by ensuring that the projections satisfies the exponential separation, {see the recent result by Rapaport \cite{Rapaportdiagonal26}.} Another way to prevent the alignment structure by assuming that the matrices satisfy the strong irreducibility assumption, namely, there is no finite collection of proper subspaces preserved by the collection of matrices. B\'ar\'any, Hochman and Rapaport \cite{BaranyHochmanRapaport2019} verified for planar systems that if the strong open set condition holds and the matrices are strongly irreducible then the Hausdorff and box-counting dimension equal to the affinity dimension. Later, Hochman and Rapaport \cite{HochmanRapaport2021} extended this for planar { exponentially} separated systems, and Rapaport \cite{rapaport2024} recently extended it for systems on $\bbR^3$ with strong open set condition.

Bedford \cite{Bedford1984}, McMullen \cite{McMullen1984} and Gatzouras and Lalley \cite{GatzourasLalley} also showed that the proper dimensional Hausdorff measure of their carpet constructions is positive and finite if and only if the box and Hausdorff dimension coincide. Peres \cite{Peres1994} studied the Hausdorff measure of Bedford-McMullen type sets in the complementary case and he showed that if the box and Hausdorff dimension do not coincide then the proper dimensional Hausdorff measure is infinite. This phenomenon has been recently extended to general Bara\'nski carpets by Qiu and Wang \cite{qiu2024hausdorff}. Kempton \cite{Kempton16} studied the slices of the so-called Przytycki-Urba\'nski carpets defined in \cite{PrzUrb}. He showed that Lebesgue almost every slice has positive $s_0-1$-dimensional Hausdorff measure (where $s_0$ is the affinity dimension of the carpet) if and only if the projection of the natural measure is absolutely continuous with bounded density. This implies that the $s_0$-dimensional Hausdorff measure is positive. This result was extended by Peng and Kamae \cite{PengKamae15} generalised for certain "function type" self-affine sets.

The aforementioned studies on the Hausdorff measure were restricted to the case when the set were carpet like, that is, there is some alignment structure. We have only a very restrictive knowledge on the Hausdorff measure in the strongly irreducible case. A direct corollary of the result of K\"aenm\"aki \cite{Kaenmaki2004} is that the $s_0$-dimensional Hausdorff measure of every self-affine set is finite, where $s_0$ is the affinity dimension. According to our best knowledge, the first result on the question under which circumstances is the $s_0$-dimensional Hausdorff measure positive in the strongly irreducible regime was due to B\'ar\'any, K\"aenm\"aki and Yu \cite{BaranyKaenmakiYu2026}. They studied dominated systems with affinity dimension smaller than $1$ and they introduced the projective separation condition which is equivalent to the positivity of the Hausdorff measure.

The goal of this paper is to extend the result of B\'ar\'any, K\"aenm\"aki and Yu \cite{BaranyKaenmakiYu2026} for the case when the affinity dimension is between $1$ and $2$.

\subsection{Main results} \label{sec:background}

Before we state the main results of the paper, let us introduce some basic notations standard in the theory of self-affine IFSs. Let $\cA$ be a finite set of indices and let us denote the usual symbolic space by $\Sigma=\cA^\bbN$, and the set of finite words by $\Sigma_*=\bigcup_{n=0}^\infty\cA^n$. { For a finite word $\bi=(i_1,\ldots,i_n)\in\Sigma_*$, denote $|\bi|$ the length of $\bi$ and denote $\bi_-$ the finite word removing the last symbol of $\bi$, that is, for $\bi=(i_1,\ldots,i_n)$, $\bi_-=(i_1,\ldots,i_{n-1})$. In case of infinite words $\bi\in\Sigma$, we use the convention $|\bi|=\infty$. Furthermore, for any $\bi=(i_1,i_2,\ldots)\in\Sigma\cup\Sigma_*$ and $n\leq|\bi|$, let $\bi|_n=(i_1,\ldots,i_n)$. We use the convention that $\bi|_0=\emptyset$. For a finite word $\bi=(i_1,\ldots,i_n)\in\Sigma_*$, let $\ai:=(i_n,\ldots,i_1)$ be the word formed by the symbols of $\bi$ in reversed order. Similarly, let us write $\ai|_n:=(i_n,\ldots,i_1)$ for $\bi=(i_1,i_2,\ldots)\in\Sigma$ and $n\geq1$.
	
For $\bi=(i_1,\ldots),\bj=(j_1,\ldots)\in\Sigma$, let $|\bi\wedge\bj|:=\min\{k\geq0:i_{k+1}\neq j_{k+1}\}$ and $\bi\wedge\bj:=\bi|_{|\bi\wedge\bj|}$. For a word $\bi\in\Sigma_*$, let $[\bi]:=\{\bj\in\Sigma:\bj|_{|\bi|}=\bi\}$ be the cylinder set, that is, all the infinite words with prefix $\bi$. We equip $\Sigma$ with the topology generated by the cylinders. This topology is coincides with the topology generated by the ultra-metric $d(\bi,\bj)=\gamma^{-|\bi\wedge\bj|}$, where $0<\gamma<1$, and $\Sigma$ is compact. The Borel sigma-algebra on $\Sigma$ coincides with the sigma-algebra generated by the cylinder sets. Finally, denote $\sigma\colon\Sigma\to\Sigma$ the usual left-shift operator.}

{ For a finite word $\bi=(i_1,\ldots,i_n)\in\Sigma_*$, let $f_\bi=f_{i_1}\circ\cdots\circ f_{i_n}$, $A_{\bi}=A_{i_1}\cdots A_{i_n}$ and $A_{\bi}^*=A_{i_1}^*\cdots A_{i_n}^*$, where $A^*$ denotes the transpose of the matrix $A$. Note that the matrix $A_{\bi}^*$ differs from the transpose of the matrix $A_{\bi}$, which would be $A_{i_n}^*\cdots A_{i_1}^*$, so whenever we require to use the transpose of the matrix $A_{\bi}$, we will use the notation $(A_{\bi})^*$ or $A_{\ai}^*$. We use the same convention for the inverses of matrix products.} Let us define the natural projection $\pi\colon\Sigma\to X$ by
\begin{equation}\label{eq:natproj}
	\pi(\bi):=\lim_{n\to\infty}f_{\bi|_n}(0).
\end{equation} { Clearly, $\pi(\bi)=f_{i_1}(\pi(\sigma\bi))$, and it is H\"older continuous with respect to the metric $d$ on $\Sigma$ and the usual Euclidean metric.}

Throughout the paper, we will assume that the collection of matrices $\{A_i\}_{i\in\cA}$ is \textit{dominated}. That is, there exist $C>0$ and $0<\tau<1$ such that
\begin{equation}\label{eq:domindef}
\alpha_2(A_{\bi})\leq C\tau^{|\bi|}\alpha_1(A_{\bi})\text{ for every $\bi\in\Sigma_*$.}
\end{equation}
Bochi and Gourmelon \cite[Theorem~A]{BochiGourmelon2009} showed that the matrices $\{A_i\}_{i\in\cA}$ are dominated if and only if $\{A_i\}_{i\in\cA}$ admits a strongly invariant multicone. We say that a proper subset $\cC\subset\bbRP^1$ is a \textit{multicone} if it is a finite union of closed projective intervals. Moreover, we say that a multicone $\cC$ is strongly invariant if $\bigcup_{i\in\cA}A_i^*\cC\subseteq\cC^o$. Let us define the collection of \textit{Furstenberg directions} by $X_F=\bigcap_{n=0}^\infty\bigcup_{\bi:|\bi|=n}A_{\bi}^*\cC$. We define, similarly to the natural projection, a map $V\colon\Sigma\to X_F$ by
\begin{equation}\label{eq:furstproj}
	\{V(\bi)\}=\bigcap_{n=1}^\infty A_{\bi|_n}^*\cC.
\end{equation}
One can easily see that $V(\bi)=A_{i_1}^*V(\sigma\bi)$, {and $V$ is H\"older continuous with respect to the metric $d$ on $\Sigma$ and the metric $\rho$ on the projective space $\bbRP^1$, where
$$
\rho(V,W)=|\sin(\sphericalangle(V,W))|\text{ for }V,W\in\bbRP^1,
$$
and $\sphericalangle$ denotes the angle between the subspaces $V$ and $W$.} With a slight abuse of the notation, we will say that the IFS $\Phi=\{f_i(x)=A_ix+t_i\}_{i\in\cA}$ is dominated if the set of linear parts $\{A_i\}_{i\in\cA}$ is dominated.

If $\{A_i\}_{i\in\cA}$ is dominated then there exists a unique left-shift invariant ergodic probability measure $\mu_K$ on $\Sigma$ such that there exists $c>0$ such that
\begin{equation}\label{eq:Kaenmakimeas}
	c^{-1}\varphi^{s_0}(A_{\bi})\leq\mu_K([\bi])\leq c\varphi^{s_0}(A_{\bi}),
\end{equation}
see K\"aenm\"aki \cite{Kaenmaki2004} and B\'ar\'any, K\"aenm\"aki and Morris \cite{BaranyKaenmakiMorris2018}. A simple combination of the existence of the K\"aenm\"aki measure \cref{eq:Kaenmakimeas} and the covering argument by Falconer \cite{Falconer1988} implies that $\cH^{s_0}(X)<\infty$. For a proof, see \cite[Lemma~2.18]{BaranyKaenmakiYu2026}.

For $V\in\bbRP^1$, denote $\proj_V\colon\bbR^2\to V$ the orthogonal projection onto the subspace $V$, and let us denote by $\lambda_V$ the Lebesgue measure on $V$. Now, we are ready to state our main result.

\begin{theorem}\label{thm:main}
	Let $\Phi=\{f_i(x)=A_ix+t_i\}_{i\in\cA}$ be a dominated planar IFS of affinities  with affinity dimension $s_0\in(1,2]$. Let $X$ be the attractor of $\Phi$, let $\mu_K$ be the K\"aenm\"aki measure and let $\pi$ be the natural projection. Then the following are equivalent:
	\begin{enumerate}[(a)]
		\item\label{it:posmeas} $\cH^{s_0}(X)>0$;
		\item\label{it:posintexist} there exists $V\in X_F$ such that $\int \cH_\infty^{s_0-1}(X\cap\proj_V^{-1}(t))d\lambda_V(t)>0$;
		\item\label{it:posintevery} $\inf_{V\in X_F}\int \cH_\infty^{s_0-1}(X\cap\proj_V^{-1}(t))d\lambda_V(t)>0$;
		\item\label{it:massdist} there exists a constant $C>0$ such that $\pi_*\mu_K(B(x,r))\leq C\cdot r^{s_0}$ for every $x\in X$, $r>0$, where $B(x,r)$ denotes the ball with radius $r$ centred at $x$.
	\end{enumerate}
\end{theorem}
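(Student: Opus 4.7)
The plan is to establish the chain $(c) \Rightarrow (b) \Rightarrow (a) \Rightarrow (d) \Rightarrow (a)$ together with $(a) \Rightarrow (c)$, which yields the four-way equivalence. Three of the non-trivial links are routine, and I would dispose of them first. The implication $(c) \Rightarrow (b)$ is immediate by specialising to any $V \in X_F$. For $(d) \Rightarrow (a)$, apply the mass distribution principle to $\mu := \pi_*\mu_K$: any $\delta$-cover $\{U_i\}$ of $X$ admits balls $B(x_i,|U_i|) \supset U_i$, so $1 = \mu(X) \leq \sum_i \mu(U_i) \leq C \sum_i |U_i|^{s_0}$, whence $\cH^{s_0}(X) \geq 1/C$. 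For $(b) \Rightarrow (a)$, I would cite Mattila's Fubini-type slicing inequality for Hausdorff content,
$$\int \cH^{s_0-1}_\infty(X \cap \proj_V^{-1}(t))\,d\lambda_V(t) \leq c_{s_0}\,\cH^{s_0}_\infty(X) \leq c_{s_0}\,\cH^{s_0}(X),$$
valid for every $V \in \bbRP^1$, which immediately transfers positivity.

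For the central implication $(a) \Rightarrow (d)$, I would argue by contradiction: suppose $\mu$ fails the mass distribution bound, so there exist $x_n \in X$ and $r_n \downarrow 0$ with $\mu(B(x_n,r_n))/r_n^{s_0} \to \infty$. Using $\mu_K([\bi]) \asymp \varphi^{s_0}(A_\bi) = \alpha_1(A_\bi)\alpha_2(A_\bi)^{s_0-1}$, introduce the stopping set $\Xi_n$ at cylinders with $\alpha_1(A_\bi) \sim r_n$. By domination, each $f_\bi(X)$ for $\bi \in \Xi_n$ lies in a rectangle of length $\sim r_n$ and width $\leq C \tau^{|\bi|} r_n$, aligned along the Furstenberg direction $V(\bi)$. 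Translating the density blow-up into the language of $\Xi_n$, the ball $B(x_n,r_n)$ must contain a ``high-multiplicity cluster" of such rectangles. Pulling this cluster back via a self-affine contraction $f_\bj$ with $f_\bj(X) \cap B(x_n,r_n)$ carrying a definite fraction of $\mu$-mass, and concatenating with the original cluster, produces for each fixed depth $N$ a cover of a positive-$\mu_K$-measure subset of $X$ whose total $s_0$-cost tends to zero as $n \to \infty$, contradicting (a).

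For $(a) \Rightarrow (c)$, the previous step gives (d), so $\mu$ is $s_0$-Frostman. Fix $V \in X_F$. Domination provides a uniform transversality constant $\eta > 0$ between $V$ and the strong-stable direction of every $A_\bj$, so for the stopping set $\Xi_\ell$ at scale $\ell$ one has $|\proj_V(f_\bi(X))| \gtrsim \ell$ for every $\bi \in \Xi_\ell$, with implicit constant independent of $V \in X_F$. Combined with the Frostman bound of (d), the projected measure $\nu := (\proj_V)_*\mu$ has positive lower $\lambda_V$-density on a set of positive $\lambda_V$-measure (uniformly in $V$). Disintegrating $\mu = \int \mu_t^V\,d\nu(t)$ along $\proj_V$ and applying the mass distribution principle at dimension $s_0-1$ to each slice measure $\mu_t^V$ (which inherits a uniform $(s_0-1)$-Frostman bound from (d) via a Fubini-type computation on thin strips, using $s_0 > 1$) yields $\cH^{s_0-1}_\infty(X \cap \proj_V^{-1}(t)) \geq \mu_t^V(X)/C$ on that positive-mass set. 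Integrating gives the positive lower bound in (c), and the uniformity of all constants in $V \in X_F$ yields the infimum claim.

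The main obstacle is $(a) \Rightarrow (d)$. In the $s_0 < 1$ regime of \cite{BaranyKaenmakiYu2021-preprint} the singular-value function collapses to $\alpha_1^{s_0}$ and the geometry is effectively one-scale. In the present $s_0 \in (1,2]$ regime both $\alpha_1$ and $\alpha_2$ genuinely contribute to $\varphi^{s_0}$, so the contradiction-producing cover must simultaneously track cluster size along the expanding direction (controlled by $\alpha_1$) and packing depth along the stable direction (controlled by $\alpha_2$). The delicate point is turning a purely local high-density anomaly into a globally self-replicating overlap pattern efficient enough to violate $\cH^{s_0}(X)>0$; this will require a careful iterative pullback via self-affine contractions together with a combinatorial analysis of the stopping set $\Xi_n$ indexed by the Furstenberg direction map $V$.
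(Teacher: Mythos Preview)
Your treatment of $(c)\Rightarrow(b)$, $(b)\Rightarrow(a)$ (via Marstrand's slicing inequality) and $(d)\Rightarrow(a)$ (mass distribution) is correct and coincides with what the paper does for those directions. The two substantive links, however, are argued along a route different from the paper's and, as written, contain genuine gaps.

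\textbf{The contradiction argument for $(a)\Rightarrow(d)$.} Your sketch ends by producing, for each $n$, a cover of a \emph{positive-$\mu_K$-mass subset} of $X$ with total $s_0$-cost tending to $0$. This does not contradict $\cH^{s_0}(X)>0$: a set of positive $\pi_*\mu_K$-measure may well have zero $\cH^{s_0}$-measure unless you already know $\pi_*\mu_K\ll\cH^{s_0}|_X$, which is essentially (d). To salvage a Schief-type argument one would have to propagate the anomaly to \emph{all} of $X$ via the quasi-Bernoulli structure, but the affine distortion (balls map to ellipses with eccentricity governed by $\alpha_1/\alpha_2$) makes this propagation step nontrivial, and nothing in your sketch addresses it. The paper avoids this entirely: it never proves $(a)\Rightarrow(d)$ directly. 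Instead it shows $(a)\Rightarrow(b)$ by constructing an explicit efficient cover of $X$ (\cref{lem:hausintform}): for each $\bi$ one takes near-optimal $(s_0-1)$-covers of the slices $X\cap\proj_{V(\bi)}^{-1}(t)$, thickens them into thin rectangles, pushes forward under $f_{\ai}$, and tiles the resulting parallelograms by lozenges; summing gives $\cH^{s_0}(X)\lesssim\sup_{\bi}h(\bi)$ where $h(\bi)=\int\cH_\infty^{s_0-1}(X\cap\proj_{V(\bi)}^{-1}(t))\,d\lambda(t)$.

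\textbf{The disintegration argument for $(a)\Rightarrow(c)$.} The ``Fubini on thin strips'' computation done correctly yields only $\int_{B(t,\ell)}\mu_s^V(B(y,\ell))\,d\nu(s)\lesssim\ell^{s_0}$, i.e.\ an \emph{average} bound on the slice measures; upgrading this to a pointwise $(s_0-1)$-Frostman bound \emph{uniform in $t,y,\ell$ and $V\in X_F$} is precisely the content of a Marstrand-type slicing theorem for a single fixed direction, and is not a routine step. (If you instead take the strip width $r\to0$ first, the exponent becomes $\ell r^{s_0-2}$, which is useless for $s_0<2$.) The paper bypasses this difficulty by running the argument the other way around: it shows that $h$ satisfies $h\le\cL h$ for the Ruelle transfer operator $\cL$ of the potential $g(\bi)=\log\|A_{i_1}^*|V(\sigma\bi)\|+(s_0-1)\log\|A_{i_1}|V(\bi)^\perp\|$, and then deduces from Perron--Frobenius theory that $h$ is a constant multiple of the positive eigenfunction $p$ (\cref{prop:Hauscontent}). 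Hence either $h\equiv0$ or $\inf h>0$, which is exactly $(b)\Leftrightarrow(c)$. The implication $(c)\Rightarrow(d)$ is then obtained from an integral representation $\pi_*\mu_K(B)=\iint\cH^{s_0-1}(B\cap\proj_{V(\bi)}^{-1}(t))\,d\lambda(t)\,d\nu(\bi)\big/\text{(normalisation)}$, see \cref{prop:equivKaenmaki2}, which makes the Frostman bound immediate.

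In short, the thermodynamic-formalism backbone (the transfer operator $\cL$, its eigenfunction $p$, and the slice-integral identity) is what carries the hard implications in the paper, and your proposal has no substitute for it.
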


Unlike to the self-similar case, see Bandt and Graf \cite{BandtGraf1992} and Schief \cite{Schief1994}, and unlike to the dominated self-affine case with $s_0\leq1$, see B\'ar\'any, K\"aenm\"aki and Yu \cite{BaranyKaenmakiYu2026}, $\cH^{s_0}(X)>0$ does not imply the $s_0$-Ahflors regularity of $X$. In particular, B\'ar\'any, K\"aenm\"aki and Yu \cite{BaranyKaenmakiYu2026} showed that for a dominated planar self-affine set with strong separation if $s_0>1$ then $X$ cannot be $s_0$-Ahlfors regular. However, \cref{thm:main} shows that the positivity of the $s_0$-dimension Hausdorff measure is equivalent to a very rigid geometric structure, which is not easy-to-verify.

The positivity of the Hausdorff measure has some further consequences:

\begin{theorem}\label{thm:maincor}
	Let $\Phi=\{f_i(x)=A_ix+t_i\}_{i\in\cA}$ be a dominated planar IFS of affinities  with affinity dimension $s_0\in(1,2]$. Let $X$ be the attractor of $\Phi$, let $X_F$ be the set of Furstenberg directions, let $\mu_K$ be the K\"aenm\"aki measure and let $\pi$ be the natural projection. If $\cH^{s_0}(X)>0$ then
\begin{enumerate}[(i)]
	\item\label{it:boundeddens} there exists a constant $C>0$ such that $(\proj_V)_*\pi_*\mu_K(B(t,r))\leq C\cdot r$ for every $V\in X_F$, $t\in \proj_V(X)$ and $r>0$;
	\item\label{it:boundedmeas} there exists $C>0$ such that for every $V\in X_F$ and for every $t\in \proj_V(X)$,\linebreak $\cH^{s_0-1}(X\cap\proj_V^{-1}(t))\leq C$.
\end{enumerate}
\end{theorem}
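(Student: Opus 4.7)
The plan is to derive both parts from \cref{thm:main}(d), which provides the $s_0$-Frostman bound $\pi_*\mu_K(B(x,r)) \leq C r^{s_0}$, combined with the dominated-splitting geometry. The central geometric ingredient I would use is that each cylinder $f_\bi(X)$ lies in a rectangle of dimensions $\asymp \alpha_1(A_\bi) \times \alpha_2(A_\bi)$ with long axis in the direction of the top left singular vector $v_1(A_\bi)$, and that by the strong invariance of the multicone $\cC$ the set $X_F$ is uniformly bounded away from the family of perpendiculars $\{v_1(A_\bi)^\perp : \bi \in \Sigma_*\}$ (a standard consequence of domination). Consequently, for any $V \in X_F$ and any word $\bi$, the slice $f_\bi(X) \cap \proj_V^{-1}(t)$ has diameter at most $C \alpha_2(A_\bi)$, uniformly in $\bi$, $V$ and $t$.

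For (i), observe first that the naive cover of the strip $\proj_V^{-1}(B(t,r))$ by $r$-balls using only (d) yields the suboptimal bound $O(r^{s_0-1})$, so the linear-in-$r$ estimate must exploit the Furstenberg structure directly. I would prove (i) by an inductive argument on cylinders using the cocycle identity $A_\bi^* V(\bj) = V(\bi\bj)$ (which is immediate from $V(\bi)=A_{i_1}^*V(\sigma\bi)$) together with the sub-multiplicativity $\varphi^{s_0}(A_\bi A_\bj) \asymp \varphi^{s_0}(A_\bi)\varphi^{s_0}(A_\bj)$ valid for dominated products. Decomposing along the stopping-time antichain $\cI(r) := \{\bi : \alpha_1(A_\bi) \leq r < \alpha_1(A_{\bi|_{|\bi|-1}})\}$, each piece would be controlled via the projection onto the shifted direction $A_\bi^* V \in X_F$; iterating this self-consistent estimate together with the Frostman bound from (d) yields the sharp linear bound uniformly in $V\in X_F$ and $t$.

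For (ii), I would then deduce the slice bound from (i) by a direct cover. Writing $Y_{V,t} := X \cap \proj_V^{-1}(t)$, by the geometric fact above each $\bi \in \cI(r)$ with $f_\bi(X) \cap Y_{V,t} \neq \emptyset$ contributes a set of diameter $\leq C \alpha_2(A_\bi)$, so
\[
\cH^{s_0-1}_{Cr}(Y_{V,t}) \leq C^{s_0-1} \sum_{\bi \in \cI(r):\, f_\bi(X) \cap Y_{V,t} \neq \emptyset} \alpha_2(A_\bi)^{s_0-1}.
\]
Since $\mu_K([\bi]) \asymp \varphi^{s_0}(A_\bi) = \alpha_1(A_\bi)\alpha_2(A_\bi)^{s_0-1} \asymp r\, \alpha_2(A_\bi)^{s_0-1}$ for $\bi \in \cI(r)$, and since all contributing cylinders lie in $\proj_V^{-1}(B(t, c r))$, the right-hand side is $\lesssim r^{-1} \pi_*\mu_K(\proj_V^{-1}(B(t, c r))) \leq C''$ by (i). Letting $r \to 0$ yields $\cH^{s_0-1}(Y_{V,t}) \leq C$ uniformly.

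The main obstacle is Step (i) itself: proving the linear-in-$r$ bound uniformly for every $V \in X_F$ (not just on average over $t$, which follows easily from $\cH^{s_0}(X)<\infty$ and a Fubini-type inequality). This goes strictly beyond the Frostman property (d) and requires exploiting the rigid self-affine-cum-cocycle structure of $\mu_K$ together with the shift-dynamics of $V$ on $X_F$; it effectively realizes the \emph{absolute continuity with bounded density} characterization of the projected K\"aenm\"aki measure that is advertised in the abstract.
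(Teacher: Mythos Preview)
Your argument for (ii) is correct and is exactly what the paper does: stop at $\Gamma_r=\{\bj:\alpha_1(A_\bj)\asymp r\}$, bound each slice diameter by $\lesssim\alpha_2(A_\bj)$, convert $\sum\alpha_2^{s_0-1}$ into $r^{-1}\sum\mu_K([\bj])\lesssim r^{-1}(\proj_V)_*\pi_*\mu_K(B(t,r))$, and invoke (i).

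The gap is in (i). Your ``self-consistent inductive estimate'' is not a proof, and the natural ways to make it precise fail. Decomposing along $\cI(r)$ and pulling back through $f_\bi^{-1}$ turns the width-$r$ strip into a width-$\asymp 1$ strip in the shifted direction $A_\bi^*V$, so each piece contributes only the trivial bound $\mu_K([\bi])$, and you are back to controlling $\sum_{\bi\in\cI(r),\text{ hits}}\alpha_2(A_\bi)^{s_0-1}$. A one-step recursion gives effective weights $\asymp\alpha_2(A_k)^{s_0-1}$, and since $\sum_k\alpha_1(A_k)\alpha_2(A_k)^{s_0-1}\asymp 1$ with $\alpha_1(A_k)<1$ one has $\sum_k\alpha_2(A_k)^{s_0-1}>1$ in general, so there is no contraction. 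A two-scale argument yields only a submultiplicative relation $M(r)\lesssim M(r/\rho)M(\rho)$ (up to constants that accumulate under iteration), which does not improve the a priori bound $M(r)\lesssim r^{s_0-2}$ coming from~(d). You correctly identify (i) as the main obstacle, but the outline does not supply the missing idea.

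The paper's route for (i) is entirely different and does not go through~(d). From \cref{thm:main} one has not just (d) but also (c), and then \cref{prop:equivKaenmaki} gives the uniform comparability $\mu_K([\bj])\asymp\eta_\bi([\bj]):=\int\cH^{s_0-1}\bigl(f_\bj(X)\cap\proj_{V(\bi)}^{-1}(t)\bigr)\,d\lambda(t)$ for every $\bi\in\Sigma$. The point is that $\eta_\bi$ is \emph{exactly} additive on cylinders (\cref{lem:intersection0}), so summing over all level-$n$ cylinders hitting the strip and letting $n\to\infty$ collapses to $\int_{B(t,2r)}\cH^{s_0-1}\bigl(X\cap\proj_{V(\bi)}^{-1}(s)\bigr)\,d\lambda(s)$; converting measure to content via \cref{lem:contentismeas} and bounding each slice trivially by $|X|^{s_0-1}$ gives the linear bound $\lesssim r$. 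The slice-measure representation of $\mu_K$ absorbs precisely the combinatorics that your cocycle iteration cannot control.
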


Clearly, \eqref{it:boundedmeas} cannot be equivalent to $\cH^{s_0}(X)>0$. For example, if the maps of $\Phi$ have a common fixed point, that is, $X$ is a singleton, but $s_0\in(1,2]$ then $\cH^{s_0-1}(X\cap\proj_V^{-1}(t))=0\leq C$ for every $t\in\proj_V(X)$, however, $\cH^{s_0}(X)=0$. Item \eqref{it:boundeddens} seems strong enough to be equivalent to $\cH^{s_0}(X)>0$ in the generality of \cref{thm:maincor}, but we could not verify it. For this reason, we introduce the open bounded neighbourhood condition motivated by the bounded neighbourhood condition introduced by Anttila, B\'ar\'any, K\"aenm\"aki \cite{AnttilaBaranyKaenmaki24}. For $r>0$, let
\begin{equation}\label{eq:Delta}
\Delta_r=\{\bi\in\Sigma_*:\alpha_2(A_{\bi})|X|\leq r<\alpha_2(A_{\bi_-})|X|\}.
\end{equation}
We say that $\Phi$ satisfies the \textit{open bounded neighbourhood condition (OBNC)} if there exists an open and bounded set $U$ such that $f_i(U)\subseteq U$ by every $i\in\cA$ and there exists $C>0$ such that for every $r>0$ and every $x\in\bbR^2$
$$
\#\{\bi\in\Delta_r:f_\bi(U)\cap B(x,r)\neq\emptyset\}\leq C.
$$
It is easy to see that the strong separation condition implies the OBNC, and the OBNC implies the bounded neighbourhood condition defined in \cite[Section~2.5]{AnttilaBaranyKaenmaki24}, but the strong open set condition does not imply bounded neighbourhood condition, see \cite[Example~3.3]{AnttilaBaranyKaenmaki24}.

\begin{theorem}\label{thm:mainsep}
		Let $\Phi=\{f_i(x)=A_ix+t_i\}_{i\in\cA}$ be a dominated planar IFS of affinities  with affinity dimension $s_0\in(1,2]$. Let $X$ be the attractor of $\Phi$, let $X_F$ be the set of Furstenberg directions, let $\mu_K$ be the K\"aenm\"aki measure and let $\pi$ be the natural projection. Furthermore, suppose that $\Phi$ satisfies the open bounded neighbourhood condition. Then the following are equivalent:
	\begin{enumerate}[(1)]
		\item\label{it:posmeas2} $\cH^{s_0}(X)>0$;
		\item\label{it:boundeddens1} there exists a constant $C>0$ such that $(\proj_V)_*\pi_*\mu_K(B(t,r))\leq C\cdot r$ for every $V\in X_F$, $t\in \proj_V(X)$ and $r>0$.
	\end{enumerate}
\end{theorem}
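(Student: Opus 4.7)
The direction \eqref{it:posmeas2}$\Rightarrow$\eqref{it:boundeddens1} is immediate from \cref{thm:maincor}\eqref{it:boundeddens}, so the real task is \eqref{it:boundeddens1}$\Rightarrow$\eqref{it:posmeas2}. My plan is to route this through \cref{thm:main}: by the equivalence of \eqref{it:posmeas} and \eqref{it:massdist} established there, it suffices to derive the Frostman bound $\pi_*\mu_K(B(x,r))\leq Cr^{s_0}$ uniformly for $x\in X$ and $r\in(0,|X|]$.

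First I would fix such $x$ and $r$, and decompose by the stopping-time antichain $\Delta_r$ from \cref{eq:Delta}:
\[
\pi_*\mu_K(B(x,r))=\sum_{\bi\in\Delta_r}\mu_K([\bi]\cap\pi^{-1}(B(x,r))).
\]
Only cylinders with $f_\bi(X)\cap B(x,r)\neq\emptyset$ contribute, and since $f_i(U)\subseteq U$ for all $i$ forces $X\subseteq\overline U$, each such $\bi$ also satisfies $f_\bi(U)\cap B(x,2r)\neq\emptyset$. The OBNC therefore caps the number of contributing cylinders by a uniform constant $N$, and the problem reduces to bounding each summand by $Cr^{s_0}$.

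Next I would invoke the quasi-Bernoulli property of $\mu_K$ (which follows from \cref{eq:Kaenmakimeas} and the sub/supermultiplicativity of $\varphi^{s_0}$ on dominated cocycles) to obtain
\[
\mu_K([\bi]\cap\pi^{-1}(B(x,r)))\leq C_1\,\mu_K([\bi])\cdot\pi_*\mu_K\bigl(f_\bi^{-1}(B(x,r))\bigr).
\]
The preimage ellipse $f_\bi^{-1}(B(x,r))$ has semi-axes $r/\alpha_1(\bi)$ and $r/\alpha_2(\bi)$, its short axis pointing along the first right singular direction of $A_\bi$. Choosing $V\in A_\bi^*X_F\subseteq X_F$ and combining the exponential contractions $|A_\bi^*X_F|\leq C\tau^{|\bi|}$ and $\alpha_2(\bi)/\alpha_1(\bi)\leq C\tau^{|\bi|}$ shows that $f_\bi^{-1}(B(x,r))\subseteq\proj_V^{-1}(I)$ for an interval $I$ with $|I|=O(r/\alpha_1(\bi))$. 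Hypothesis \eqref{it:boundeddens1} then gives $\pi_*\mu_K(f_\bi^{-1}(B(x,r)))\leq C_3\min\{1,r/\alpha_1(\bi)\}$. Multiplied by $\mu_K([\bi])\leq c\,\alpha_1(\bi)\alpha_2(\bi)^{s_0-1}\leq c'\alpha_1(\bi)r^{s_0-1}$ (using $\alpha_2(\bi)\leq r/|X|$ on $\Delta_r$), this yields the per-cylinder bound $Cr^{s_0}$, and summing over the $N$ contributing cylinders closes the argument.

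The hard part will be the preimage geometry step: the genuine short-axis direction of $f_\bi^{-1}(B(x,r))$ is the first right singular direction of $A_\bi$, which for finite $\bi$ need not lie in $X_F$, so applying hypothesis \eqref{it:boundeddens1} requires selecting a truly Furstenberg direction $V\in A_\bi^*X_F$ and trading the angular error against the aspect ratio $\alpha_2(\bi)/\alpha_1(\bi)$ via domination. A secondary point of care is that $\mu_K$ is only quasi-Bernoulli and not self-similar, so the decomposition in the third paragraph is an inequality whose constants must be tracked through the distortion bounds in \cref{eq:Kaenmakimeas}.
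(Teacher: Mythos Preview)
Your proposal is correct and follows essentially the same route as the paper: decompose over $\Delta_r$, use the quasi-Bernoulli bound from \cref{eq:Kaenmakimeas} to split off $\mu_K([\bi])$, contain $f_\bi^{-1}(B(x,r))$ in a strip perpendicular to a Furstenberg direction of width $\asymp r/\alpha_1(\bi)$, apply hypothesis \eqref{it:boundeddens1}, and finish with the OBNC count.

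The one place where you overcomplicate matters is what you flag as the ``hard part''. You never need to look at the right singular direction of $A_\bi$ or trade angular error against the domination rate. The paper simply uses the exact intertwining relation $\proj_{W}\circ f_\bi = F_{\bi,W}\circ\proj_{A_\bi^*W}$: pick any $W=V(\bj)\in X_F$, so that $A_\bi^*W=V(\ai\bj)\in X_F$, and then $f_\bi^{-1}(B(x,r))\cap X$ is contained in $\proj_{V(\ai\bj)}^{-1}$ of an interval of length $2r/\|A_\bi^*|V(\bj)\|$. Domination via \cref{eq:domin} gives $\|A_\bi^*|V(\bj)\|\asymp\alpha_1(\bi)$ directly, with no asymptotic-angle argument. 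Thus both the geometry step and the quasi-Bernoulli step are one-line consequences of \cref{eq:domin} and \cref{eq:Kaenmakimeas}, not the delicate parts you anticipate.
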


We note that \eqref{it:boundedmeas} in \cref{thm:maincor} (same as \eqref{it:boundeddens1} in \cref{thm:mainsep}) has already appeared as a sufficient condition in the recent paper of Batsis, K\"aenm\"aki and Kempton \cite[Theorem~1.3]{batsis2026} regarding the multifractal analysis of fully supported quasi-Bernoulli measures on dominated planar self-affine sets. One might wonder whether is it enough to verify the bounded density of $(\proj_V)_*\pi_*\mu_K$ for only one $V\in X_F$. It seems very likely but we were unable to prove it.

A corollary of \cref{thm:maincor} and the estimate of Anttila, B\'ar\'any and K\"aenm\"aki \cite[Proposition~3.1]{AnttilaBaranyKaenmaki24} is the following:

\begin{corollary}\label{cor}
	Let $\Phi=\{f_i(x)=A_ix+t_i\}_{i\in\cA}$ be a planar IFS of affinities with affinity dimension $s_0\in(1,2]$. Suppose that $\Phi$ is dominated and satisfies the open bounded neighbourhood condition. Denote $X$ the attractor of $\Phi$. If $\cH^{s_0}(X)>0$ then $\dim_{\rm A}X=s_0$, where $\dim_A$ denotes the Assouad dimension of $X$.
\end{corollary}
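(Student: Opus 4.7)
The plan is to match a lower and an upper bound on $\dim_A X$, both giving $s_0$.

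The lower bound is essentially free. Since $\cH^{s_0}(X)>0$ by hypothesis, we have $\dim_H X \ge s_0$, while Falconer's result \cite{Falconer1988} gives $\dim_H X \le s_0$. Hence $\dim_H X = s_0$, and since the Assouad dimension dominates the Hausdorff dimension, $\dim_A X \ge s_0$.

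For the matching upper bound $\dim_A X \le s_0$, I would invoke \cite[Proposition~3.1]{AnttilaBaranyKaenmaki24}, whose purpose is precisely to bound the Assouad dimension of a dominated planar self-affine set satisfying the bounded neighbourhood condition. As the paper already observes, the OBNC assumed here implies the bounded neighbourhood condition used in that reference, so the proposition is applicable. It reduces the upper bound to a uniform covering-type estimate on the Furstenberg slices $X \cap \proj_V^{-1}(t)$ for $V \in X_F$ and $t \in \proj_V(X)$. By \cref{thm:maincor}\eqref{it:boundedmeas}, we have $\cH^{s_0-1}(X \cap \proj_V^{-1}(t)) \le C$ uniformly in $V$ and $t$, which supplies exactly the kind of slice bound the proposition requires.

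The only point requiring care is the interface with \cite[Proposition~3.1]{AnttilaBaranyKaenmaki24}: one must verify that the uniform bound on the $(s_0-1)$-dimensional Hausdorff measure of slices furnished by \cref{thm:maincor}\eqref{it:boundedmeas} translates, directly or via the Hausdorff content, into the precise covering input that the estimate of \cite{AnttilaBaranyKaenmaki24} requires. Once this bookkeeping is carried out---using that OBNC $\Rightarrow$ BNC was already noted---the corollary is a direct combination of \cref{thm:maincor} with the cited proposition, and no further argument is needed.
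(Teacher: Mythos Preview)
Your proposal is correct and essentially identical to the paper's own proof. The only clarification worth making is that \cite[Proposition~3.1]{AnttilaBaranyKaenmaki24} gives the explicit bound $\dim_A X\le 1+\sup_{V\in X_F}\sup_{t}\dimh\big(X\cap\proj_V^{-1}(t)\big)$, so the ``interface'' you worry about is immediate: the uniform bound $\cH^{s_0-1}\big(X\cap\proj_V^{-1}(t)\big)\le C$ from \cref{thm:maincor}\eqref{it:boundedmeas} forces $\dimh\big(X\cap\proj_V^{-1}(t)\big)\le s_0-1$ for every $V\in X_F$ and $t$, and no further covering bookkeeping is needed.
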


For precise definition and properties of the Assouad dimension, see Fraser \cite{Fraser2020}.

\subsection{Examples}\label{subsec:examples}

Finally, we consider some examples for our main theorems. First, we consider a strongly irreducible example with attractor having zero proper dimensional Hausdorff measure. This example has already appeared in \cite[Example~3.3]{BaranyKaenmakiYu2026}.

\begin{example}
	 Let $q > p \ge 2$ and $p<N \in \{2,\ldots,pq\}$ be integers, and let $I \subset \{0,\ldots,p-1\} \times \{0,\ldots,q-1\}$ be a set of $N$ elements. Let $A = \begin{pmatrix}\tfrac{1}{p} & 0\\ 0 &\tfrac{1}{q}\end{pmatrix}$ and let $B$ be a $2\times 2$ matrix with $\det(B)>0$ and with strictly positive entries such that $\|B\|<1$. It is easy to see that the matrices $\{A,B\}$ are dominated and strongly irreducible with Furstenberg directions containing the $x$-axis.
	
	 Let $\epsilon>0$ and $t\in\bbR^2$ be such that the IFS
	 $$
	 \Phi_\epsilon=\left\{x\mapsto Ax+\begin{pmatrix}j/p\\k/q\end{pmatrix}\right\}_{(j,k) \in I}\cup\{x\mapsto \epsilon\cdot Bx+t\}
	 $$
	 satisfies that $f([0,1]^2)\cap g([0,1]^2)=\emptyset$ for every $f\neq g\in\Phi$, and
	 $$
	 \max_{j \in \{1,\ldots,p\}} \#\{i:(j,i)\in I\}>\frac{N}{p}>1.
	 $$
	 Let $j'$ be the symbol for which the maximum on the left-hand side is attained. By B\'ar\'any, Hochman and Rapaport \cite{BaranyHochmanRapaport2019}, $\dimh X=s_0(\epsilon)$, where $s_0(\epsilon)$ is the affinity dimension and $X$ is the attractor of $\Phi_\epsilon$. For the images of the first level cylinder sets, see \cref{fig:picture1}.
	
	 Since $\varphi^s(\epsilon\cdot B) \to 0$ as $\epsilon \to 0$ for all $s \ge 0$, the affinity dimension $s_0$ of $\Phi_\epsilon$ converges to
	 $1+\frac{\log N-\log p}{\log q}$ as $\epsilon\to0$. Hence, one can choose $\epsilon>0$ sufficiently small such that
	 $$
	 s_0(\epsilon)-1<\frac{\log\#\{i:(j',i)\in I\}}{\log q}.
	 $$
	
	 Since the $x$-axis belongs to $X_F$, and the attractor of the IFS $\left\{Ax+\left(\tfrac{j'}{p},\tfrac{k}{q}\right)\right\}_{(j',k) \in I}$ forms a slice of $X$ with dimension $\frac{\log\#\{i:(j',i)\in I\}}{\log q}>s_0(\epsilon)-1$, by \cref{thm:maincor}\eqref{it:boundedmeas}, we have $\cH^{s_0(\epsilon)}(X)=0$.
\end{example}

\begin{figure}
	\centering
	\includegraphics[width=0.33\linewidth]{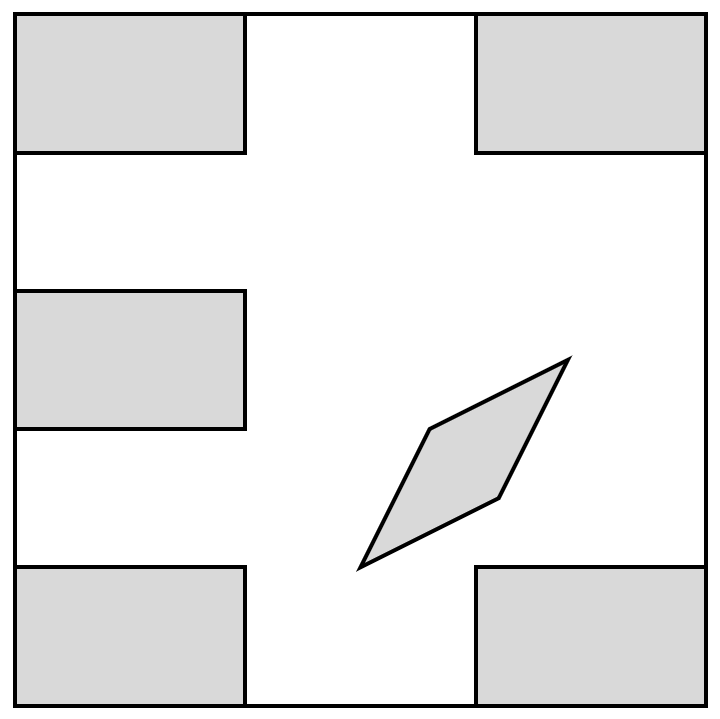}
	\caption{First level cylinder sets of the IFS $\Phi=\{f_1(x,y)=\left(\frac{x}{3},\frac{y}{5}\right),f_2(x,y)=\left(\frac{x}{3},\frac{y+2}{5}\right),f_3(x,y)=\left(\frac{x}{3},\frac{y+4}{5}\right),f_4(x,y)=\left(\frac{x+2}{3},\frac{y}{5}\right),f_5(x,y)=\left(\frac{x+2}{3},\frac{y+4}{5}\right),f_6(x,y)=\left(\frac{2x+y+5}{10},\frac{x+2y+2}{10}\right)\}$. Simple calculation shows that the Hausdorff dimension is at most $1.607$ but the largest horizontal slice has dimension $0.6826$, and so, the proper dimensional Hausdorff measure is zero.}
	\label{fig:picture1}
\end{figure}

Now, we provide two triangular examples with positive and finite $s_0$-dimensional Hausdorff measure. Unfortunately, our examples are not strongly irreducible, the linear parts of the maps of the IFS are lower triangular matrices. However, we provide examples for both cases when $X_F$ is and is not a singleton. First, we consider an example when $X_F$ is a singleton.

\begin{example}\label{ex:singlexf}
Let $\cA$ be a finite set of indices and for every $i\in\cA$, let $0<|a_i|<|c_i|<1$	such that $\max_i|c_i|<1/2$, $\sum_{i\in\cA}|c_i||a_i|^{1/4}>1$ and $\sum_{i\in\cA}|a_i|^{1/2}<1$. Let
$$
\Phi=\left\{f_i(x)=\begin{pmatrix}
	a_i & 0 \\ 0 & c_i
\end{pmatrix}x+t_i\right\}_{i\in\cA}.
$$
and denote $X$ the attractor of $\Phi$. Then $0<\cH^{s_0}(X)<\infty$ for Lebesgue-almost every $(t_{i})_{i\in\cA}\in\bbR^{2\#\cA}$, where $\sum_{i\in\cA}|c_i||a_i|^{s_0-1}=1$.
\end{example}

For example, the choices $\#\cA=10$, $c_i=\tfrac13$ and $a_i=\tfrac{1}{121}$ satisfies the assumptions of \cref{ex:singlexf}.

Now, let us consider an example with positive Hausdorff measure for which $\dimh X_F>0$.

\begin{example}\label{ex:posdimxf}
	Let
\begin{equation}\label{eq:ex2}
	\Phi=\left\{f_i(x)=\begin{pmatrix}
		a_i & 0 \\ b_i & c_i
	\end{pmatrix}x+\begin{pmatrix}t_{i,1}\\t_{i,2}\end{pmatrix}\right\}_{i\in\cA}
\end{equation}
be an IFS such that $0<|a_i|<|c_i|<1/2$, $\sum_{i\in\cA}|c_i|>1$ and  the IFS $\Phi_1=\{x\to a_ix+t_{i,1}\}_{i\in\cA}$ satisfies the strong open set condition. Denote $s_0$ the affinity dimension $\sum_{i\in\cA}|c_i||a_i|^{s_0-1}=1$, $s_0\in(1,2]$. If $\sum_{i\in\cA}|c_i|^{-1}|a_i|^{2(s_0-1)}<1$ then { $0<\cH^{s_0}(X)<\infty$} for Lebesgue-almost every $\tau=(t_{i,2})_{i\in\cA}$, where $X$ is the attractor of $\Phi$.
\end{example}

For $N\geq28$, the choices $\cA=\{0,\ldots,N-1\}$ and $a_i=\frac{1}{N+1}$, $t_{i,1}=\frac{i\cdot N}{N^2-1}$, $c_i=\frac13$ for every $i\in\cA$ satisfy the assumption of \cref{ex:posdimxf}. For a visualisation of the examples, see \cref{fig:picture2}.

\begin{figure}
	\centering
	\includegraphics[width=0.33\linewidth]{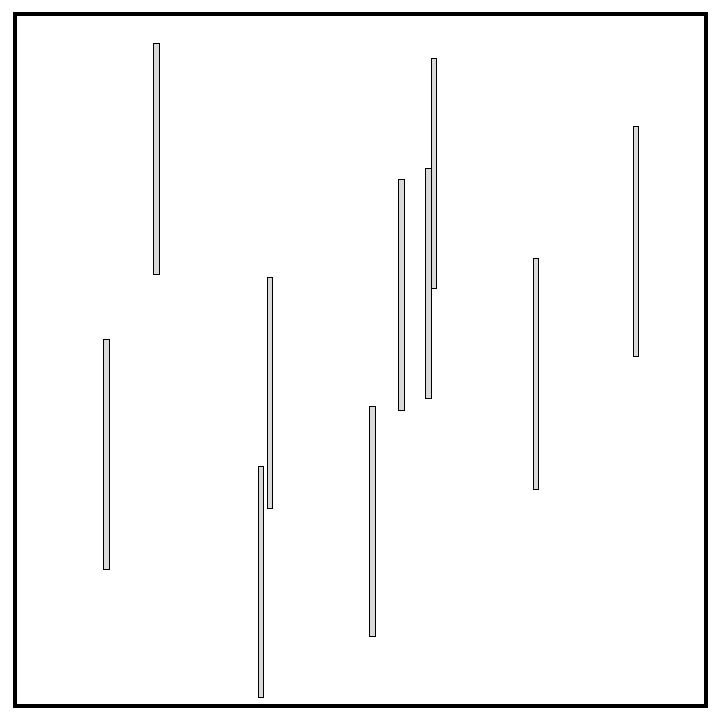}\includegraphics[width=0.33\linewidth]{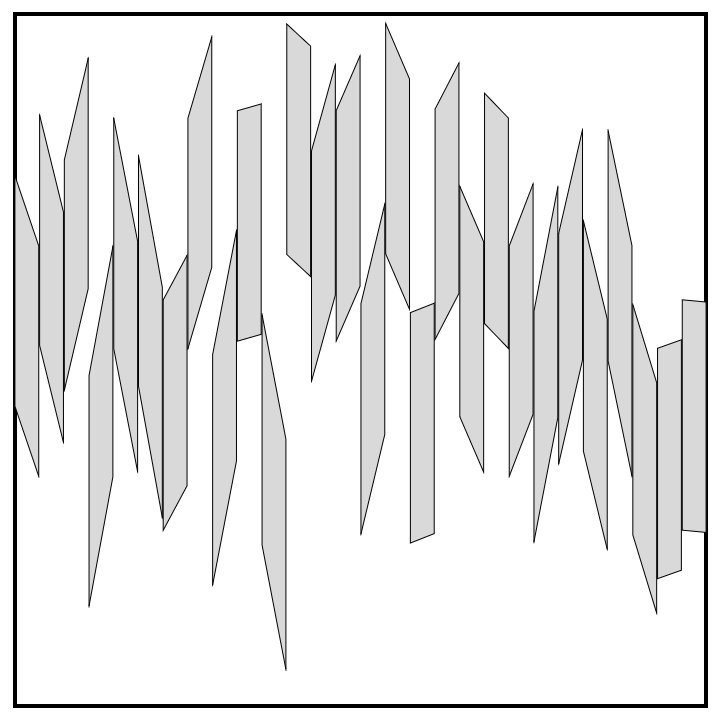}
	\caption{First level cylinder sets of the IFSs in \cref{ex:singlexf} and \cref{ex:posdimxf}, which has positive and finite Hausdorff measure.}
	\label{fig:picture2}
\end{figure}

We will verify \cref{ex:singlexf} and \cref{ex:posdimxf} in \cref{sec:examples}.

It is a natural question how typical the positivity of the Hausdorff measure is. From the examples, we saw that for given linear parts the $s_0$-dimensional Hausdorff measure is positive for almost every translation parameters. Is it true in general that for a typical choice of parameters in some proper sense the $s_0$-dimensional Hausdorff measure is positive?

\section{Preliminaries}

Throughout this paper, we will always assume that $\Phi=\{f_i(x)=A_ix+t_i\}_{i\in\cA}$ is dominated, and $s_0\in(1,2]$, where $s_0$ is the affinity dimension. Without loss of generality, we will always assume that $X\subseteq B(0,1)$. From the domination by \cite[Theorem~A]{BochiGourmelon2009}, it follows that there exists a multicone $\cC\subset\bbRP^1$ such that $A_i^*\cC\subseteq\cC^o$ for every $i\in\cA$. Then it is easy to see that $A_i^{-1}\cC^\perp\subseteq(\cC^\perp)^o$, where $\cC^\perp=\{V\in\bbRP^1:V^\perp\in\cC\}$.

Let $V\colon\Sigma\to X_F$ be the natural projection to the set of Furstenberg directions defined in \cref{eq:furstproj}. It is clear that $V\colon\Sigma\to X_F$ is H\"older-continuous. Moreover,
$$
V(\bi)=A_{i_1}^*V(\sigma\bi)\text{ and }V(\bi)^\perp=A_{i_1}^{-1}V(\sigma\bi)^\perp.
$$
By \cite[Lemma~2.2]{BochiMorris2015}, there exists a constant $C>1$ such that  and every $\bi\in\Sigma_*$
\begin{equation}\label{eq:domin}
	\|A_{\bi}^*|V\|\leq \alpha_1(A_{\bi}^*)=\alpha_1(A_{\ai})\leq C\|A_{\bi}^*|V\|\text{ and }\|A_{\bi}^{-1}|V^\perp\|\leq \alpha_1(A_{\bi}^{-1})=\alpha_2(A_{\ai})^{-1}\leq C\|A_{\bi}^{-1}|V^\perp\|
\end{equation}
for every $V\in\bigcup_{i\in\cA}A_i^*\cC$. { Let us note that above (and throughout the paper) we used the convention that $A_{\bi}^*=A_{i_1}^*\cdots A_{i_n}^*$ and $A_{\bi}^{-1}=A_{i_1}^{-1}\cdots A_{i_n}^{-1}$ for $\bi=(i_1,\ldots,i_n)$.}

With a slight abuse of notation, we define the orthogonal projection $\proj_V$ as real valued function over $V\in\cC$ as follows: for every $V\in\cV$, let $v=v(V)\in V$ be a unit vector such that the map $V\mapsto v$ is continuous on $\cC$, and let
$$
\proj_V(x)=\langle v(V),x\rangle,
$$
where $\langle\cdot,\cdot\rangle$ denotes the usual scalar product on $\bbR^d$. Note that $\proj_V\colon\bbR^2\to\bbR$ is bi-Lipschitz equivalent to { the actual orthogonal projection $x\mapsto \langle v(V),x\rangle v(V)$, since it identifies $V$ to $\bbR$ with some prefixed orientation.} Let us denote the Lebesgue measure on $[-1,1]$ by $\lambda$.

By defining $F_{i,V}\colon\bbR\to\bbR$ such that
$$
F_{i,V}=\begin{cases}
	\|A_i^*|V\|x+\proj_V(t_i) & \text{ if }\frac{A_i^*v(V)}{\|A_i^*v(V)\|}=v(A_i^*V)\\
	-\|A_i^*|V\|x+\proj_V(t_i) & \text{ otherwise.}
\end{cases}
$$
Simple calculation shows that $\proj_V(f_i(x))=F_{i,V}(\proj_{A_i^*V}(x))$ for every $x\in\bbR^2$.

\subsection{Perron-Frobenius operator and its eigenfunction}\label{sec:perron}

We define a H\"older-continuous potential $g\colon\Sigma\to\bbR$ as follows:
\[
g(\bi):=\log\|A_{i_1}^*|V(\sigma\bi)\|+(s_0-1)\log\|A_{i_1}|V(\bi)^\perp\|=\log\|A_{i_1}^*|V(\sigma\bi)\|-(s_0-1)\log\|A_{i_1}^{-1}|V(\sigma\bi)^\perp\|.
\]
Simple calculation shows that for every $\bi=(i_1,i_2,\ldots)\in\Sigma$ and $n\geq1$
\[
\sum_{k=0}^{n-1}g(\sigma^k\bi)=\log\|A_{i_1}^*\cdots A_{i_n}^*|V(\sigma^n\bi)\|-(s_0-1)\log\|A_{i_1}^{-1}\cdots A_{i_n}^{-1}|V(\sigma^n\bi)^\perp\|,
\]
and so
$$
\varphi^{s_0}(A_{\ai|_n})-\log C\leq\sum_{k=0}^{n-1}g(\sigma^k\bi)\leq\varphi^{s_0}(A_{\ai|_n})+\log C,
$$
where $\ai|_n=(i_n,\ldots,i_1)$ for $\bi=(i_1,i_2,\ldots)$.

{ Let us define the Perron-Frobenius operator $\cL$, which maps real valued functions over $\Sigma$ to real-valued functions over $\Sigma$ such that
$$
(\cL p)(\bi)=\sum_{k\in\cA}e^{g(k\bi)}p(k\bi)=\sum_{k\in\cA}\|A_k^*|V(\bi)\|\cdot\|A_{k}^{-1}|V(\bi)^\perp\|^{-(s_0-1)}\cdot p(k\bi).
$$
Since the map $\bi\mapsto e^{g(\bi)}$ is Hölder-continuous, $\cL$ maps continuous maps to continuous maps.} By Ruelle's Perron-Frobenius Theorem (see for example \cite[Theorem~1.7]{Bowen}), there exists a unique continuous function $p\colon\Sigma\to\bbR$ with $p(\bi)>0$ for every $\bi\in\Sigma$, and there exists a unique Borel probability measure $\nu$ for which $\cL p=p$, $\cL^*\nu=\nu$, $\int p(\bi)d\nu(\bi)=1$ and
\begin{equation}\label{eq:conv}
\lim_{n\to\infty}\sup_{\bi\in\Sigma}\left|(\cL^nh)(\bi)-p(\bi)\int h(\bi)d\nu(\bi)\right|=0\text{ for every $h\colon\Sigma\to\bbR$ continuous}.
\end{equation}
We define $\mu_F([\bi]):=\int_{[\bi]}p(\bj)d\nu(\bj)$, then $\mu_F$ is ergodic left-shift invariant probability measure such that for every $\bi\in\Sigma_*$ and $\bj\in\Sigma$
$$
C'^{-1}\varphi^{s_0}(A_{\ai})\leq C^{-1}\exp\left(\sum_{k=0}^{|\bi|-1}g(\sigma^k\bi\bj)\right)\leq\mu_F([\bi])\leq C\exp\left(\sum_{k=0}^{|\bi|-1}g(\sigma^k\bi\bj)\right)\leq C'\varphi^{s_0}(A_{\ai}).
$$
Note that $\mu_F$ is the "reversed" K\"aenm\"aki measure, that is, $\mu_F([\bi])=\mu_K([\ai])$, where $\ai=(i_n,\ldots,i_1)$ for $\bi=(i_1,\ldots,i_n)$, which follows from the uniqueness of the K\"aenm\"aki measure under domination, see \cite{BaranyKaenmakiMorris2018}.

\subsection{Hausdorff content of slices}

Now, let us define a map $h\colon\Sigma\to\bbR$ as follows
$$
h(\bi):=\int \cH_\infty^{s_0-1}\left(X\cap\proj_{V(\bi)}^{-1}(t)\right)d\lambda(t),
$$
{ where we recall that $\lambda$ is the Lebesgue measure on $[-1,1]$.} We will show that $h$ is a constant multiplier of the eigenfunction $p$ of $\mathcal{L}$. The proof is similar to the proof of \cite[Lemma~7.1]{BaranyKaenmakiYu2026}. Let us begin with some technical lemmas.
{
\begin{lemma}\label{lem:semicont0}
	The map $(V,t)\mapsto \cH_\infty^{s_0-1}\left(X\cap\proj_{V}^{-1}(t)\right)$ is upper semi-continuous.
\end{lemma}

\begin{proof}
To verify the claim let $(V_n,t_n)$ such that $(V_n,t_n)\to (V,t)$. Then for any sequence $x_n\in X\cap \proj_{V_n}^{-1}(t_n)$ such that $x_n\to x$ then $x\in X\cap \proj_{V}^{-1}(t)$ by the compactness of $X$. So, for $\varepsilon>0$ if $\{U_i\}$ is an open cover of $X\cap \proj_{V}^{-1}(t)$ such that $\sum_i |U_i|^{s_0-1}\leq\cH_\infty^{s_0-1}\left(X\cap\proj_{V}^{-1}(t)\right)+\varepsilon$ then without loss of generality, we may assume that $\{U_i\}$ is finite (by the compactness of $X\cap\proj_{V}^{-1}(t)$), and so, $X\cap \proj_{V_n}^{-1}(t_n)\subseteq\bigcup_iU_i$ for every sufficiently large $n$, which proves the claim
\end{proof}

\begin{lemma}\label{lem:semicont}
	The map $V\mapsto \int\cH_\infty^{s_0-1}\left(X\cap\proj_{V}^{-1}(t)\right)d\lambda(t)$ is upper semi-continuous. In particular, the map $\bi\mapsto h(\bi)$ is upper semi-continuous.
\end{lemma}

\begin{proof}
	  Let $V\in\bbRP^1$ be arbitrary and fixed and let $V_n$ be a sequence such that $V_n\to V$. Since $X\subset B(0,1)$, we have that for every $W\in\bbRP^1$ and $t\in[-1,1]$
	  $$
	  0\leq\cH^{s_0-1}_\infty\left(X\cap\proj_W^{-1}(t)\right)\leq2^{s_0-1}.
	  $$
	  Thus,
	  \begin{align*}
	  2^{s_0}-\int\cH^{s_0-1}_\infty\left(X\cap\proj_V^{-1}(t)\right)d\lambda(t)&=\int 2^{s_0-1}-\cH^{s_0-1}_\infty\left(X\cap\proj_V^{-1}(t)\right)\,d\lambda(t)\\
	  &\leq\int 2^{s_0-1}-\limsup_{n\to\infty}\cH^{s_0-1}_\infty\left(X\cap\proj_{V_n}^{-1}(t)\right)\,d\lambda(t)\\
	  &=\int \liminf_{n\to\infty}\left(2^{s_0-1}-\cH^{s_0-1}_\infty\left(X\cap\proj_{V_n}^{-1}(t)\right)\right)\,d\lambda(t)\\
	  &\leq\liminf_{n\to\infty}\int 2^{s_0-1}-\cH^{s_0-1}_\infty\left(X\cap\proj_{V_n}^{-1}(t)\right)d\lambda(t)\\
	  &=2^{s_0}-\limsup_{n\to\infty}\int\cH^{s_0-1}_\infty\left(X\cap\proj_{V_n}^{-1}(t)\right)d\lambda(t),
	  \end{align*}
	  where we used \cref{lem:semicont0} in the first inequality, and Fatou's lemma in the second. This proves the first assertion of the lemma, and since $\bi\mapsto V(\bi)$ is continuous, the second assertion follows too.
\end{proof}
}

\begin{lemma}\label{lem:operator}
	For every $\bi\in\Sigma$, $h(\bi)\leq(\cL h)(\bi)$.
\end{lemma}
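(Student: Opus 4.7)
The plan is to exploit the self-similar identity $X=\bigcup_{k\in\cA}f_k(X)$ combined with the transformation rule for the projections. Fix $\bi\in\Sigma$ and write $V=V(\bi)$. Using $\proj_V\circ f_k=F_{k,V}\circ\proj_{A_k^*V}$ from the preliminaries, and the fact that $F_{k,V}$ is an affine bijection of $\bbR$ of slope $\pm\|A_k^*|V\|$, one obtains
\[
f_k(X)\cap\proj_V^{-1}(t)=f_k\Bigl(X\cap\proj_{A_k^*V}^{-1}\bigl(F_{k,V}^{-1}(t)\bigr)\Bigr).
\]
Subadditivity of the Hausdorff content then gives
\[
\cH_\infty^{s_0-1}\bigl(X\cap\proj_V^{-1}(t)\bigr)\leq\sum_{k\in\cA}\cH_\infty^{s_0-1}\Bigl(f_k\bigl(X\cap\proj_{A_k^*V}^{-1}(F_{k,V}^{-1}(t))\bigr)\Bigr).
\]

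Next, I would bound each summand using that the slice $X\cap\proj_{A_k^*V}^{-1}(s)$ lives in a single affine line of direction $(A_k^*V)^\perp=A_k^{-1}V^\perp$. For any subset of a line $L\subset\bbR^2$, the Hausdorff content may be computed using covers by subsets of $L$: a general cover $\{U_i\}$ in $\bbR^2$ gives the in-line cover $\{U_i\cap L\}$ with $|U_i\cap L|\leq|U_i|$. The map $f_k$ sends this line isometrically-up-to-scale onto the line $\proj_V^{-1}(F_{k,V}(s))$ of direction $V^\perp$, with scaling factor $\|A_k|_{(A_k^*V)^\perp}\|$. A direct computation, using that $A_k$ takes $A_k^{-1}V^\perp$ bijectively onto $V^\perp$, shows
\[
\|A_k|_{(A_k^*V)^\perp}\|=\|A_k^{-1}|V^\perp\|^{-1},
\]
and hence
\[
\cH_\infty^{s_0-1}\Bigl(f_k\bigl(X\cap\proj_{A_k^*V}^{-1}(s)\bigr)\Bigr)\leq\|A_k^{-1}|V^\perp\|^{-(s_0-1)}\cdot\cH_\infty^{s_0-1}\bigl(X\cap\proj_{A_k^*V}^{-1}(s)\bigr).
\]

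Finally, integrating over $t$ and substituting $s=F_{k,V}^{-1}(t)$, so that $dt=\|A_k^*|V\|\,ds$, and using $V(k\bi)=A_k^*V(\bi)$, I obtain
\[
\int\cH_\infty^{s_0-1}\bigl(X\cap\proj_{A_k^*V}^{-1}(F_{k,V}^{-1}(t))\bigr)\,d\lambda(t)=\|A_k^*|V(\bi)\|\cdot h(k\bi).
\]
Summing over $k\in\cA$ yields
\[
h(\bi)\leq\sum_{k\in\cA}\|A_k^*|V(\bi)\|\cdot\|A_k^{-1}|V(\bi)^\perp\|^{-(s_0-1)}\cdot h(k\bi)=\sum_{k\in\cA}e^{g(k\bi)}h(k\bi)=(\cL h)(\bi),
\]
matching the definition of the Perron--Frobenius operator from \cref{sec:perron}.

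The one delicate step is the scaling of the Hausdorff content: the naive estimate $|f_k(U)|\leq\|A_k\|\cdot|U|=\alpha_1(A_k)|U|$ produces the wrong singular value and would make the sum diverge relative to $e^{g(k\bi)}$. Only after using the 1-dimensionality of the slice to restrict to in-line covers does $A_k$ act through its smaller ``$V^\perp$-direction'' norm $\|A_k^{-1}|V^\perp\|^{-1}$, which is precisely the factor that the potential $g$ was designed to absorb. The remaining ingredients -- self-similarity, subadditivity, and an affine change of variables -- are routine.
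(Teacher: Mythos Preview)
Your proof is correct and follows essentially the same route as the paper's: both use the self-affine decomposition $X=\bigcup_k f_k(X)$, subadditivity of the Hausdorff content, the change of variables $t\mapsto F_{k,V}^{-1}(t)$ with Jacobian $\|A_k^*|V(\bi)\|$, and the in-line scaling of the content by $\|A_k|V(k\bi)^\perp\|^{s_0-1}=\|A_k^{-1}|V(\bi)^\perp\|^{-(s_0-1)}$. Your explicit justification that the slice is one-dimensional (so covers may be taken within the line and $f_k$ acts as a similarity with ratio $\|A_k|_{(A_k^*V)^\perp}\|$) is a welcome addition that the paper leaves implicit.
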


\begin{proof}
It is easy to see that for every $V\in\bbRP^1$, $x,y\in\bbR^2$ and $i\in\cA$
$$
\|\proj_V(f_i(x))-\proj_V(f_i(y))\|=\|A_i^*|V\|\cdot\|\proj_{A_i^*V}(x-y)\|.
$$
So
\begin{align*}
	\int \cH_\infty^{s_0-1}&\left(X\cap\proj_{V(\bi)}^{-1}(t)\right)d\lambda(t)\\
	&\leq\sum_{k\in\cA}\int \cH_\infty^{s_0-1}\left(f_k(X)\cap\proj_{V(\bi)}^{-1}(t)\right)d\lambda(t)\\
	&=\sum_{k\in\cA}\int_{\proj_{V(\bi)}(f_k(X))} \cH_\infty^{s_0-1}\left(f_k(X)\cap\proj_{V(\bi)}^{-1}(t)\right)d\lambda(t)\\
	&=\sum_{k\in\cA}\int_{\proj_{V(k\bi)}(X)} \cH_\infty^{s_0-1}\left(f_k(X)\cap\proj_{V(\bi)}^{-1}(F_{k,V}(t))\right)\|A_k^*|V(\bi)\|d\lambda(t)\\
	&=\sum_{k\in\cA}\|A_k^*|V(\bi)\|\int_{\proj_{V(k\bi)}(X)} \cH_\infty^{s_0-1}\left(f_k\left(X\cap\proj_{V(k\bi)}^{-1}(t)\right)\right)d\lambda(t)\\
	&=\sum_{k\in\cA}\|A_k^*|V(\bi)\|\int_{\proj_{V(k\bi)}(X)} \cH_\infty^{s_0-1}\left(X\cap\proj_{V(k\bi)}^{-1}(t)\right)\|A_k|V(k\bi)^\perp\|^{s_0-1}d\lambda(t)\\
	&=(\cL h)(\bi).
\end{align*}
\end{proof}

\begin{proposition}\label{prop:Hauscontent}
	If $p\colon\Sigma\to(0,\infty)$ is the map and $\nu$ is the measure defined by Ruelle's Perron-Frobenius Theorem in \cref{sec:perron}, then we get
	$$
	h(\bi)=p(\bi)\iint \cH_\infty^{s_0-1}\left(X\cap\proj_{V(\bj)}^{-1}(t)\right)d\lambda(t)d\nu(\bj).
	$$
	In particular, either $h\equiv0$ or $\inf_{\bi\in\Sigma}h(\bi)>0$.
\end{proposition}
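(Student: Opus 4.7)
The plan is to show that $h = C \cdot p$ pointwise, where $C := \int h\,d\nu$, by combining the subharmonicity $h \leq \cL h$ from \cref{lem:operator} with the spectral convergence \eqref{eq:conv} of the Perron--Frobenius operator, and then to read off the dichotomy from the strict positivity of $p$. First, iterating \cref{lem:operator} gives $h \leq \cL h \leq \cL^2 h \leq \cdots$, so the sequence $\{\cL^n h(\bi)\}$ is non-decreasing for every $\bi$. Since $X$ is compact and $\proj_{V(\bi)}(X)$ is bounded, $h$ is bounded, and by \cref{lem:semicont} it is upper semi-continuous on the compact metric space $\Sigma$, so there is a decreasing sequence of continuous (indeed Lipschitz) functions $h_k \downarrow h$, e.g.\ the envelopes $h_k(\bi) := \sup_{\bj}(h(\bj) - k\, d(\bi,\bj))$.

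Next, the positivity of $\cL$ (each weight $e^{g(k\bi)}$ is strictly positive) yields $\cL^n h \leq \cL^n h_k$ for all $k,n$, and by \eqref{eq:conv} the right-hand side converges uniformly to $p \cdot \int h_k\,d\nu$ as $n \to \infty$. Sending $n \to \infty$ first and then $k \to \infty$, with monotone convergence replacing $\int h_k\,d\nu$ by $C = \int h\,d\nu$, I get
\[
\sup_n \cL^n h(\bi) \leq C \cdot p(\bi),
\]
and combined with $h \leq \cL^n h$ this gives the pointwise bound $h(\bi) \leq C\,p(\bi)$. Integrating against $\nu$ and using $\cL^*\nu = \nu$ together with $\int p\,d\nu = 1$ gives $\int h\,d\nu \leq C = \int h\,d\nu$, i.e.\ equality. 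Hence the non-negative function $q := C\,p - h$ satisfies $\int q\,d\nu = 0$.

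To upgrade ``$q = 0$ $\nu$-a.e.'' to ``$q \equiv 0$'', I use that $q$ is lower semi-continuous (as the difference of the continuous $p$ and the u.s.c.\ $h$) and that $\nu$ has full support on $\Sigma$: indeed $d\mu_F = p\,d\nu$ with $p > 0$, and $\mu_F([\bi]) \asymp \varphi^{s_0}(A_{\ai}) > 0$ on every cylinder. A non-negative lower semi-continuous function integrating to zero against a fully supported Borel measure must vanish identically (if $q(\bi_0) > 0$, l.s.c.\ forces $q > q(\bi_0)/2$ on some open neighbourhood of $\bi_0$, which has positive $\nu$-measure, a contradiction). Therefore $h(\bi) = C\,p(\bi)$ for every $\bi \in \Sigma$, which is the stated identity. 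The dichotomy is then immediate: $p$ is continuous and strictly positive on the compact set $\Sigma$, so $\inf_\bi p(\bi) > 0$, and hence either $C = 0$ and $h \equiv 0$, or $C > 0$ and $\inf_\bi h(\bi) \geq C \cdot \inf_\bi p(\bi) > 0$.

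The main obstacle is that the spectral convergence \eqref{eq:conv} only applies to continuous test functions, while $h$ is a priori merely upper semi-continuous. This forces the monotone approximation step $h_k \downarrow h$, which by positivity of $\cL$ yields only the one-sided bound $h \leq Cp$; the matching lower bound cannot be obtained by the same route and must instead be recovered globally by integrating against $\nu$ and invoking the full support of $\nu$ together with the semi-continuity of $q$, which is what converts an a.e.\ identity into a pointwise one.
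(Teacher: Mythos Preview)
Your proof is correct and follows essentially the same route as the paper: approximate the upper semi-continuous $h$ from above by continuous functions, use the spectral convergence \eqref{eq:conv} together with the monotonicity $h\le\cL^n h$ to obtain the pointwise bound $h\le Cp$, and then upgrade from $\nu$-a.e.\ equality to pointwise equality via the lower semi-continuity of $Cp-h$ and the full support of $\nu$. The only cosmetic difference is that the paper carries out the last step by picking a sequence $\bi_n\to\bi$ from the $\nu$-full-measure set and invoking upper semi-continuity of $h$, whereas you phrase it as ``nonnegative l.s.c.\ function with zero integral against a fully supported measure vanishes''; these are equivalent.
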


\begin{proof}
	Since $h\colon\Sigma\to\bbR$ is upper semi-continuous by \cref{lem:semicont}, for every $n\geq1$ there exists a continuous function $h_n\colon\Sigma\to\bbR$ such that $h(\bi)\leq h_n(\bi)$ and $\int h_n(\bi)d\nu(\bi)\leq \int h(\bi)d\nu(\bi)+1/n$ by \cite[Theorem~2.1.3]{Ransford} and the monotone convergence theorem. Then by \cref{eq:conv}
	\[
	h(\bi)\leq\liminf_{k\to\infty}(\cL^kh)(\bi)\leq\liminf_{k\to\infty}(\cL^kh_n)(\bi)=p(\bi)\int h_nd\nu\leq p(\bi)\left(\int hd\nu+1/n\right).
	\]
	Since $n\geq1$ was arbitrary
	$$
	h(\bi)\leq p(\bi) \int hd\nu.
	$$
	{ Moreover,
	$$
	\int p(\bi)\int hd\nu-h(\bi)\,d\nu(\bi)=\int hd\nu\,\left(\int pd\nu-1\right)=0.
	$$
	In summary, the function $\bi\mapsto p(\bi)\int hd\nu-h(\bi)$ is non-negative, lower semi-continuous and has integral zero on a fully supported measure, and hence, it is identically zero, which had to be shown.}
\end{proof}

\subsection{Hausdorff measure of slices}

\begin{proposition}\label{prop:Hausmeas} Let $h\colon\Sigma\to[0,\infty)$ be the function defined in \cref{prop:Hauscontent}. Then
	$$
	h(\bi)=\int \cH^{s_0-1}\left(X\cap\proj_{V(\bi)}^{-1}(t)\right)d\lambda(t).
	$$
\end{proposition}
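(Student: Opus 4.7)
The plan is to introduce the $\delta$-approximating functionals
\[
h_\delta(\bi) := \int \cH_\delta^{s_0-1}\left(X\cap \proj_{V(\bi)}^{-1}(t)\right) d\lambda(t),
\]
so that the $h$ analysed in \cref{prop:Hauscontent} equals $h_\infty$, while by monotone convergence the right-hand side of the desired identity equals $\sup_{\delta>0} h_\delta=\lim_{\delta\to 0}h_\delta$. The goal is to prove $h_\delta\equiv h$ for every $\delta>0$, from which the claim follows by letting $\delta\to 0$.

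First I would repeat the proof of \cref{lem:operator} with $\cH_\delta^{s_0-1}$ in place of $\cH_\infty^{s_0-1}$. The only modification is in the scaling step: restricted to a line parallel to $V(k\bi)^\perp$, the map $f_k$ stretches distances by $\|A_k|V(k\bi)^\perp\|$, so a cover of $f_k(X)\cap\proj_{V(\bi)}^{-1}(t)$ by sets of diameter $<\delta$ pulls back to a cover of the preimage slice by sets of diameter $<\delta/\|A_k|V(k\bi)^\perp\|$. The same change-of-variables computation then yields
\[
h_\delta(\bi)\leq \sum_{k\in\cA} e^{g(k\bi)}\, h_{\delta/\|A_k|V(k\bi)^\perp\|}(k\bi),
\]
and iterating $n$ times gives $h_\delta(\bi)\leq \sum_{|\bj|=n} e^{S_n g(\bj\bi)}\, h_{\delta/\Lambda_\bj(\bi)}(\bj\bi)$, where $\Lambda_\bj(\bi)\asymp\alpha_2(A_\bj)\leq C\tau^n$ by domination, so $\delta/\Lambda_\bj(\bi)\to\infty$ uniformly in $\bj\in\cA^n$ and $\bi\in\Sigma$ as $n\to\infty$. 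Second, I would show that $h_D\downarrow h$ uniformly on the compact space $\Sigma$ as $D\to\infty$. Each $h_D$ is upper semi-continuous by verbatim reuse of the proof of \cref{lem:semicont} (which only used upper semi-continuity of the Hausdorff content on slices, a property equally valid for $\cH_D^{s_0-1}$ at any finite $D$), while $h=(\int h\,d\nu)\cdot p$ is continuous by \cref{prop:Hauscontent} together with the continuity of $p$; hence the upper semi-continuous version of Dini's theorem yields uniform convergence.

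Given $\varepsilon>0$, choose $D_0$ with $h_D\leq h+\varepsilon$ uniformly for $D\geq D_0$, then take $n$ large enough that $\delta/\Lambda_\bj(\bi)\geq D_0$ for every $\bj\in\cA^n$ and $\bi\in\Sigma$. Using $\cL h=h$ and that $\cL^n\bone$ is uniformly bounded (since $\cL^n\bone\to p$ uniformly by \cref{eq:conv}), the iterated inequality gives
\[
h_\delta(\bi)\leq \cL^n(h+\varepsilon)(\bi) = h(\bi)+\varepsilon\,(\cL^n\bone)(\bi) \leq h(\bi)+C\varepsilon.
\]
Letting $\varepsilon\to 0$ gives $h_\delta\leq h$; combined with $h_\delta\geq h$ from $\cH_\delta^{s_0-1}\geq\cH_\infty^{s_0-1}$, this yields $h_\delta=h$ for all $\delta>0$. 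The main obstacle is the Dini step: pointwise convergence $h_D\to h$ would not suffice, because the error would propagate through the exponentially many summands of $\cL^n$. The continuity of $h$ furnished by \cref{prop:Hauscontent} is precisely what activates the upper semi-continuous Dini argument, and is the structural reason \cref{prop:Hauscontent} must come before \cref{prop:Hausmeas}.
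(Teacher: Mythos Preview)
Your argument is correct and follows the same overall strategy as the paper: iterate the transfer operator on the $\delta$-approximating functionals and use the eigenfunction identity $\cL h=h$ from \cref{prop:Hauscontent} to squeeze $h_\delta$ down to $h$. The implementations differ only in how the iteration is terminated. The paper avoids your scaling-of-$\delta$ bookkeeping and the Dini step by observing directly that once $n$ is large enough that $|f_{\bj}(X)|\leq\delta$ for all $|\bj|=n$, every set $f_\bj(X)\cap\proj_{V(\bi)}^{-1}(t)$ has diameter $\leq\delta$, so $\cH_\delta^{s_0-1}$ and $\cH_\infty^{s_0-1}$ already coincide on it; this gives $h_\delta(\bi)\leq(\cL^n h)(\bi)=h(\bi)$ in one line, and then Fatou finishes. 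Your route also works, but the ``main obstacle'' you flag is in fact vacuous: every slice $X\cap\proj_{V(\bi)}^{-1}(t)$ has diameter at most $|X|$, so $h_D=h$ \emph{exactly} for all $D\geq|X|$, and no Dini-type argument is needed---choosing $n$ large enough that $\delta/\Lambda_{\bj}(\bi)\geq|X|$ is the same threshold the paper uses.
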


\begin{proof}
	Let $n\geq1$ be such that $|f_{\bj}(X)|\leq\delta$ for every $\bj$ with $|\bj|\geq n$. Then
	$$
	\cH^{s_0-1}_\delta\left(f_{\bj}(X)\cap\proj_{V(\bi)}^{-1}(t)\right)=\cH^{s_0-1}_\infty\left(f_{\bj}(X)\cap\proj_{V(\bi)}^{-1}(t)\right).
	$$
	Thus, similarly to the proof of \cref{lem:operator}, for every $\delta>0$ and $\bi\in\Sigma$ we get
	\begin{align*}
	\int_{\proj_{V(\bi)}(X)} &\cH^{s_0-1}_\delta\left(X\cap\proj_{V(\bi)}^{-1}(t)\right)d\lambda(t)\\
	&\leq \sum_{|\bj|=n}\int_{\proj_{V(\bi)}(X)} \cH^{s_0-1}_\delta\left(f_{\bj}(X)\cap\proj_{V(\bi)}^{-1}(t)\right)d\lambda(t)\\
	&=\sum_{|\bj|=n}\int_{\proj_{V(\bi)}(X)} \cH^{s_0-1}_\infty\left(f_{\bj}(X)\cap\proj_{V(\bi)}^{-1}(t)\right)d\lambda(t)\\
	&=\sum_{|\bj|=n}\int_{\proj_{V(\bi)}(f_{\bj}(X))} \cH^{s_0-1}_\infty\left(f_{\bj}(X)\cap\proj_{V(\bi)}^{-1}(t)\right)d\lambda(t)\\
	&=\sum_{|\bj|=n}\int_{\proj_{V(\bi)}(f_{\bj}(X))} \cH^{s_0-1}_\infty\left(f_{\bj}\left(X\cap\proj_{V(\aj\bi)}^{-1}(F_{\bj,V(\bi)}(t))\right)\right)d\lambda(t)\\
	&=\sum_{|\bj|=n}\|A_{\bj}^*|V(\bi)\|\int_{\proj_{V(\aj\bi)}(X)} \cH^{s_0-1}_\infty\left(f_{\bj}\left(X\cap\proj_{V(\aj\bi)}^{-1}(t)\right)\right)d\lambda(t)\\
	&=\sum_{|\bj|=n}\|A_{\bj}^*|V(\bi)\|\|A_{\bj}|V(\aj\bi)^\perp\|^{s_0-1}\int_{\proj_{V(\aj\bi)}(X)} \cH^{s_0-1}_\infty\left(X\cap\proj_{V(\aj\bi)}^{-1}(t)\right)d\lambda(t)\\
	&=(\cL^nh)(\bi)=h(\bi).
	\end{align*}
	Hence,
	\begin{align*}
	h(\bi)&\geq\liminf_{\delta\to0}\int_{\proj_{V(\bi)}(X)} \cH^{s_0-1}_\delta\left(X\cap\proj_{V(\bi)}^{-1}(t)\right)d\lambda(t)\\
	&\geq\int_{\proj_{V(\bi)}(X)} \liminf_{\delta\to0}\cH^{s_0-1}_\delta\left(X\cap\proj_{V(\bi)}^{-1}(t)\right)d\lambda(t)\\
	&=\int_{\proj_{V(\bi)}(X)}\cH^{s_0-1}\left(X\cap\proj_{V(\bi)}^{-1}(t)\right)d\lambda(t)\\
	&\geq \int_{\proj_{V(\bi)}(X)}\cH^{s_0-1}_\infty\left(X\cap\proj_{V(\bi)}^{-1}(t)\right)d\lambda(t)=h(\bi),
	\end{align*}
which completes the proof.
\end{proof}

In particular, we get that
\begin{equation}\label{eq:contismeas}
\int \cH_\infty^{s_0-1}\left(X\cap\proj_{V(\bj)}^{-1}(t)\right)d\lambda(t)=\int \cH^{s_0-1}\left(X\cap\proj_{V(\bj)}^{-1}(t)\right)d\lambda(t)
\end{equation}
for every $\bj\in\Sigma$, hence, the right-hand side is always finite. This has the following simple consequence:

\begin{lemma}\label{lem:contentismeas}
	Let $B\subseteq X$  be a Borel set. Then for every $\bj\in\Sigma$
	$$
	\int \cH_\infty^{s_0-1}\left(B\cap\proj_{V(\bj)}^{-1}(t)\right)d\lambda(t)=\int \cH^{s_0-1}\left(B\cap\proj_{V(\bj)}^{-1}(t)\right)d\lambda(t).
	$$
	In particular, for every Borel subset $B\subset X$ and every $\bj\in\Sigma$, $\cH_\infty^{s_0-1}\left(B\cap\proj_{V(\bj)}^{-1}(t)\right)=\linebreak\cH^{s_0-1}\left(B\cap\proj_{V(\bj)}^{-1}(t)\right)$ for $\lambda$-almost every $t$.
\end{lemma}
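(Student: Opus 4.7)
The plan is to bootstrap from the $B=X$ case already established in \eqref{eq:contismeas}, using the Borel decomposition $X = B \sqcup (X \setminus B)$. Fix $\bj \in \Sigma$, and abbreviate $V=V(\bj)$, $A_t = X\cap\proj_V^{-1}(t)$, $B_t = B\cap\proj_V^{-1}(t)$, $C_t = (X\setminus B)\cap\proj_V^{-1}(t)$. For every $t$, $A_t = B_t \sqcup C_t$ is a disjoint union of Borel subsets of $\bbR^2$.

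First I would record, for every $t$, the pointwise chain
\begin{align*}
\cH^{s_0-1}_\infty(A_t) &\leq \cH^{s_0-1}_\infty(B_t) + \cH^{s_0-1}_\infty(C_t) \\
 &\leq \cH^{s_0-1}(B_t) + \cH^{s_0-1}(C_t) = \cH^{s_0-1}(A_t),
\end{align*}
using, in order, subadditivity of the Hausdorff content, the pointwise bound $\cH^{s_0-1}_\infty \leq \cH^{s_0-1}$, and finite additivity of $\cH^{s_0-1}$ on disjoint Borel sets.

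Next I would integrate with respect to $\lambda$, invoking the standard measurability of $t\mapsto\cH^{s_0-1}(E_t)$ and $t\mapsto\cH^{s_0-1}_\infty(E_t)$ for Borel $E\subseteq X$ (a routine consequence of Borel decomposability and the monotone class arguments familiar from slicing theory). By \eqref{eq:contismeas} the integrals of the two extreme quantities above coincide, and by \cref{prop:Hauscontent} together with the compactness of $X$ they are finite. Hence equality must persist throughout the chain after integration, which after rearrangement reads
$$
\int \bigl[\cH^{s_0-1}(B_t) - \cH^{s_0-1}_\infty(B_t)\bigr] d\lambda(t) + \int \bigl[\cH^{s_0-1}(C_t) - \cH^{s_0-1}_\infty(C_t)\bigr] d\lambda(t) = 0.
$$
Both integrands are nonnegative, so each integral vanishes, giving the first assertion for $B$ (and simultaneously for $X\setminus B$). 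The \emph{in particular} clause then follows at once: a nonnegative measurable function with zero Lebesgue integral vanishes $\lambda$-almost everywhere.

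There is no substantive mathematical obstacle here; the argument is essentially bookkeeping around the splitting $X = B \sqcup (X\setminus B)$ combined with the already-proved equality for $B=X$. The only point requiring some care is the $\lambda$-measurability of the slice quantities for an arbitrary Borel $B$, which I would cite as a standard slicing fact (in the spirit of the upper semicontinuity argument carried out in \cref{lem:semicont} for compact slices, extended to Borel $B$ by monotone-class considerations) rather than reprove from scratch.
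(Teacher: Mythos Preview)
Your proposal is correct and follows essentially the same approach as the paper: both arguments decompose $X=B\sqcup(X\setminus B)$, combine subadditivity of the content, additivity of the Hausdorff measure, and the pointwise bound $\cH_\infty^{s_0-1}\le\cH^{s_0-1}$, and then close the loop using the already-established equality \eqref{eq:contismeas} for $B=X$. The only difference is organizational: the paper writes a single chain $\int\cH_\infty^{s_0-1}(B_t)\le\cdots\le\int\cH_\infty^{s_0-1}(B_t)$, whereas you integrate the symmetric pointwise chain and read off the result for $B$ and $X\setminus B$ simultaneously.
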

{
\begin{proof}
	Since the Hausdorff content of a set always bounds the Hausdorff measure from below, we get
	\begin{align*}
		\int \cH_\infty^{s_0-1}\left(B\cap\proj_{V(\bj)}^{-1}(t)\right)d\lambda(t)&\leq\int \cH^{s_0-1}\left(B\cap\proj_{V(\bj)}^{-1}(t)\right)d\lambda(t)\\
		\intertext{then by using the additivity of the Hausdorff measure, we get}
		&=\int\cH^{s_0-1}\left(X\cap\proj_{V(\bj)}^{-1}(t)\right)-\cH^{s_0-1}\left((X\setminus B)\cap\proj_{V(\bj)}^{-1}(t)\right)d\lambda(t)\\
		\intertext{now by \eqref{eq:contismeas}}
		&\leq\int\cH^{s_0-1}_\infty\left(X\cap\proj_{V(\bj)}^{-1}(t)\right)-\cH^{s_0-1}_\infty\left((X\setminus B)\cap\proj_{V(\bj)}^{-1}(t)\right)d\lambda(t)\\
		&\leq \int \cH_\infty^{s_0-1}\left(B\cap\proj_{V(\bj)}^{-1}(t)\right)d\lambda(t),
	\end{align*}
where the last inequality uses the countable subadditivity of the Hausdorff content.
\end{proof}}

Another important corollary of \cref{prop:Hausmeas} is the following:

\begin{lemma}\label{lem:intersection0}
	For $\bj\neq\hbar\in\Sigma_*$ with $[\bj]\cap[\hbar]=\emptyset$, and $\bi\in\Sigma$,
	$$
	\int \cH^{s_0-1}\left(f_{\bj}(X)\cap f_{\hbar}(X)\cap\proj_{V(\bi)}^{-1}(t)\right)d\lambda(t)=0.
	$$
	In particular, for every $\cH^{s_0-1}\left(f_{\bj}(X)\cap f_{\hbar}(X)\cap\proj_{V(\bi)}^{-1}(t)\right)=0$ for $\lambda$-almost every $t$.
\end{lemma}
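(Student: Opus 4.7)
The plan is to leverage the additivity encoded in the eigenfunction identity $\cL^n h = h$ from \cref{prop:Hauscontent} in order to force the intersections $f_\bj(X) \cap f_\hbar(X)$ to be negligible on $\lambda$-almost every slice. Concretely, applying \cref{prop:Hausmeas} at the shifted sequence $\aj\bi$ for each $\bj$ with $|\bj|=n$, together with the change-of-variables computation carried out inside the proof of that proposition, gives
\begin{equation*}
\int \cH^{s_0-1}\bigl(f_\bj(X) \cap \proj_{V(\bi)}^{-1}(t)\bigr)\, d\lambda(t) = \|A_\bj^*|V(\bi)\|\, \|A_\bj|V(\aj\bi)^\perp\|^{s_0-1}\, h(\aj\bi).
\end{equation*}
Summing over $|\bj|=n$ and using $\cL^n h = h$ will yield the key additivity
\begin{equation*}
\sum_{|\bj|=n} \int \cH^{s_0-1}\bigl(f_\bj(X) \cap \proj_{V(\bi)}^{-1}(t)\bigr)\, d\lambda(t) = h(\bi) = \int \cH^{s_0-1}\bigl(X \cap \proj_{V(\bi)}^{-1}(t)\bigr)\, d\lambda(t)
\end{equation*}
for every $n \geq 1$ and every $\bi \in \Sigma$.

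Since $X = \bigcup_{|\bj|=n} f_\bj(X)$, subadditivity of $\cH^{s_0-1}$ gives the reverse pointwise inequality between the two integrands, so the integrated equality above forces pointwise equality for $\lambda$-a.e.\ $t$. With the common value finite $\lambda$-a.e.\ (since $h(\bi)<\infty$), a standard finite-additivity argument applies: ordering $\cA^n = \{\bj_1, \ldots, \bj_N\}$ and disjointifying $A_{\bj_k} := f_{\bj_k}(X) \cap \proj_{V(\bi)}^{-1}(t)$ into $B_{\bj_k} := A_{\bj_k} \setminus \bigcup_{l<k} A_{\bj_l}$, the identity $\sum_k \cH^{s_0-1}(A_{\bj_k}) = \cH^{s_0-1}(\bigcup_k A_{\bj_k}) = \sum_k \cH^{s_0-1}(B_{\bj_k})$, combined with $\cH^{s_0-1}(A_{\bj_k}) \geq \cH^{s_0-1}(B_{\bj_k})$, forces $\cH^{s_0-1}(A_{\bj_k} \setminus B_{\bj_k}) = 0$ for every $k$. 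Consequently $\cH^{s_0-1}(f_\bj(X) \cap f_\hbar(X) \cap \proj_{V(\bi)}^{-1}(t)) = 0$ for all distinct $\bj, \hbar \in \cA^n$ and $\lambda$-a.e.\ $t$, and integration yields the lemma for equal-length words.

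To handle the general case, I extend both $\bj$ and $\hbar$ to a common length $n := \max(|\bj|, |\hbar|)$: write $f_\bj(X) = \bigcup_{\bj' \succ \bj,\ |\bj'|=n} f_{\bj'}(X)$ and similarly for $\hbar$. The hypothesis $[\bj] \cap [\hbar] = \emptyset$ guarantees $\bj' \neq \hbar'$ for all such extensions, so $f_\bj(X) \cap f_\hbar(X)$ is a finite union of sets covered by the equal-length case. The ``in particular'' pointwise statement $\cH^{s_0-1}(f_\bj(X) \cap f_\hbar(X) \cap \proj_{V(\bi)}^{-1}(t)) = 0$ for $\lambda$-a.e.\ $t$ is then immediate from the vanishing of the non-negative integrand. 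The main (and essentially only) subtlety is ensuring that the integral additivity holds for every $n \geq 1$ rather than only in a $\delta\to 0$ limit as in the proof of \cref{prop:Hausmeas}; this is automatic from the exact eigenfunction relation $\cL^n h = h$, so no new analytic input is needed beyond what is already established.
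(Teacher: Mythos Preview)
Your proof is correct and follows essentially the same approach as the paper: both reduce to equal-length words, invoke the exact identity $(\cL^n h)(\bi)=h(\bi)$ from \cref{prop:Hauscontent} together with the change-of-variables formula (as in \cref{lem:tomeas2} and the proof of \cref{prop:Hausmeas}) to obtain $\sum_{|\bj|=n}\int\cH^{s_0-1}(f_\bj(X)\cap\proj_{V(\bi)}^{-1}(t))\,d\lambda(t)=h(\bi)$, and then compare with subadditivity to force the intersection integral to vanish. The only cosmetic difference is that the paper subtracts the single intersection term directly via two-set inclusion--exclusion inside the integrand, whereas you pass through pointwise $\lambda$-a.e.\ equality and a disjointification step; both routes are equally valid.
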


\begin{proof}
	It is enough to show the claim of the lemma for finite words with equal length. Thus, similarly to the previous arguments, for every $n\geq1$, $\bj,\hbar\in\Sigma_n$ and $\bi\in\Sigma$
	\begin{align*}
		h(\bi)&=\int\cH^{s_0-1}\left(X\cap\proj_{V(\bi)}^{-1}(t)\right)d\lambda(t)\\
		&=\int\cH^{s_0-1}\left(\bigcup_{\bj'\in\Sigma_n}f_{\bj'}(X)\cap\proj_{V(\bi)}^{-1}(t)\right)d\lambda(t)\\
		&\leq\sum_{|\bj'|=n}\int\cH^{s_0-1}\left(f_{\bj'}(X)\cap\proj_{V(\bi)}^{-1}(t)\right)d\lambda(t)-\int \cH^{s_0-1}\left(f_{\bj}(X)\cap f_{\hbar}(X)\cap\proj_{V(\bi)}^{-1}(t)\right)d\lambda(t)\\
		&=\sum_{|\bj'|=n}\|A_{\bj'}^*|V(\bi)\|\|A_{\bj'}|V(\aj'\bi)^\perp\|^{s_0-1}\int_{\proj_{V(\aj'\bi)}(X)} \cH^{s_0-1}\left(X\cap\proj_{V(\aj'\bi)}^{-1}(t)\right)d\lambda(t)\\
		&\qquad\qquad-\int \cH^{s_0-1}\left(f_{\bj}(X)\cap f_{\hbar}(X)\cap\proj_{V(\bi)}^{-1}(t)\right)d\lambda(t)\\
		&=(\cL^nh)(\bi)-\int \cH^{s_0-1}\left(f_{\bj}(X)\cap f_{\hbar}(X)\cap\proj_{V(\bi)}^{-1}(t)\right)d\lambda(t)\\
		&=h(\bi)-\int \cH^{s_0-1}\left(f_{\bj}(X)\cap f_{\hbar}(X)\cap\proj_{V(\bi)}^{-1}(t)\right)d\lambda(t),
	\end{align*}
	where we applied \cref{prop:Hausmeas} and \cref{prop:Hauscontent}.
\end{proof}

\begin{lemma}\label{lem:tomeas2} For every $k\in\cA$ and every Borel set $B\subseteq X$,
	$$
	\int \cH^{s_0-1}(f_{k}(B)\cap \proj_{V(\bi)}^{-1}(t))d\lambda(t)=\|A_{k}^*|V(\bi)\|\|A_{k}|V(k\bi)^\perp\|^{s_0-1}\int \cH^{s_0-1}\left(B\cap\proj_{V(k\bi)}^{-1}(t)\right)d\lambda(t).
	$$
\end{lemma}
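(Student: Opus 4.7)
The plan is to reduce the statement to the single-map version of the calculation that already appeared inside the proof of \cref{lem:operator}, but now with a general Borel set $B$ in place of the full attractor $X$ and without the initial sum over all $k\in\cA$. The only conceptual input beyond change of variables is how the scalar Hausdorff $(s_0-1)$-measure of a single slice transforms under $f_k$.

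First I would record the two geometric facts that make everything work. The identity $\proj_{V(\bi)}(f_k(x)) = F_{k,V(\bi)}(\proj_{V(k\bi)}(x))$, already noted in \cref{sec:perron}, implies that
\[
f_k(B)\cap\proj_{V(\bi)}^{-1}(t) = f_k\bigl(B\cap\proj_{V(k\bi)}^{-1}(F_{k,V(\bi)}^{-1}(t))\bigr),
\]
so fibres over $t$ pull back to fibres over $s=F_{k,V(\bi)}^{-1}(t)$. Secondly, since $V(k\bi)=A_k^*V(\bi)$, a short computation using $\langle A_k w, v\rangle=\langle w, A_k^*v\rangle$ shows that $A_k$ sends $V(k\bi)^\perp$ bijectively onto $V(\bi)^\perp$. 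Hence the affine line $\proj_{V(k\bi)}^{-1}(s)$, which is parallel to $V(k\bi)^\perp$, is mapped by $f_k$ affinely onto the line $\proj_{V(\bi)}^{-1}(F_{k,V(\bi)}(s))$, parallel to $V(\bi)^\perp$, and the linear part of this induced affine map between one-dimensional lines is exactly multiplication by $\|A_k|V(k\bi)^\perp\|$ (up to sign).

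Second, I would use this last point to record the Hausdorff scaling
\[
\cH^{s_0-1}\bigl(f_k(B\cap\proj_{V(k\bi)}^{-1}(s))\bigr) = \|A_k|V(k\bi)^\perp\|^{s_0-1}\,\cH^{s_0-1}\bigl(B\cap\proj_{V(k\bi)}^{-1}(s)\bigr),
\]
which follows from the elementary fact that if $T:\bbR\to\bbR$ is affine with linear part of absolute value $\rho$, then $\cH^{s_0-1}(T(E))=\rho^{s_0-1}\cH^{s_0-1}(E)$. Then I would substitute and change variables $t=F_{k,V(\bi)}(s)$, which is affine with $|F_{k,V(\bi)}'|=\|A_k^*|V(\bi)\|$:
\begin{align*}
\int \cH^{s_0-1}&\bigl(f_k(B)\cap\proj_{V(\bi)}^{-1}(t)\bigr)\,d\lambda(t)\\
&= \int \cH^{s_0-1}\bigl(f_k(B\cap\proj_{V(k\bi)}^{-1}(F_{k,V(\bi)}^{-1}(t)))\bigr)\,d\lambda(t)\\
&= \|A_k^*|V(\bi)\|\int \cH^{s_0-1}\bigl(f_k(B\cap\proj_{V(k\bi)}^{-1}(s))\bigr)\,d\lambda(s)\\
&= \|A_k^*|V(\bi)\|\,\|A_k|V(k\bi)^\perp\|^{s_0-1}\int \cH^{s_0-1}\bigl(B\cap\proj_{V(k\bi)}^{-1}(s)\bigr)\,d\lambda(s),
\end{align*}
which is the claimed identity.

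The only potential obstacle is a measurability/finiteness concern: one needs $t\mapsto \cH^{s_0-1}(f_k(B)\cap\proj_{V(\bi)}^{-1}(t))$ and $s\mapsto \cH^{s_0-1}(B\cap\proj_{V(k\bi)}^{-1}(s))$ to be jointly measurable and integrable so that the change of variables is justified. But \cref{prop:Hausmeas} together with \cref{lem:contentismeas} already asserts that these $(s_0-1)$-Hausdorff slice functions are Borel measurable and their integrals over $\lambda$ are bounded by $h(\cdot)<\infty$, so Tonelli/Fubini and the one-dimensional affine change of variables apply without difficulty. No further ingredients are needed.
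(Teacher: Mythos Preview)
Your argument is correct and is essentially the same as the paper's: both proofs combine the fibre identity $f_k(B)\cap\proj_{V(\bi)}^{-1}(t)=f_k\bigl(B\cap\proj_{V(k\bi)}^{-1}(F_{k,V(\bi)}^{-1}(t))\bigr)$, the affine change of variables $t=F_{k,V(\bi)}(s)$ contributing the factor $\|A_k^*|V(\bi)\|$, and the one-dimensional Hausdorff scaling along slices contributing $\|A_k|V(k\bi)^\perp\|^{s_0-1}$. The only cosmetic difference is the order in which the change of variables and the Hausdorff scaling are applied, and your added remark on measurability is a welcome clarification the paper leaves implicit.
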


\begin{proof} Using the facts that $F_{k,V(\bi)}\colon\bbR\to\bbR$ and $f_k\colon V(k\bi)^\perp\to V(\bi)^\perp$ are affine maps, we get by simple algebraic manipulations that
	\begin{align*}
	\int \cH^{s_0-1}&\left(f_{k}(B)\cap \proj_{V(\bi)}^{-1}(t)\right)d\lambda(t)\\
	&=\int_{\proj_{V(\bi)}(f_k(B))} \cH^{s_0-1}\left(f_k(B)\cap\proj_{V(\bi)}^{-1}(t)\right)d\lambda(t)\\
	&=\int_{F_{k,V(\bi)}(\proj_{V(k\bi)}(B))} \cH^{s_0-1}\left(f_k(B)\cap\proj_{V(\bi)}^{-1}(t)\right)d\lambda(t)\\
	&=\|A_k^*|V(\bi)\|\int_{\proj_{V(k\bi)}(B)} \cH^{s_0-1}\left(f_k(B)\cap\proj_{V(\bi)}^{-1}(F_{k,V(\bi)}(t))\right)d\lambda(t)\\
	&=\|A_k^*|V(\bi)\|\int_{\proj_{V(k\bi)}(B)} \cH^{s_0-1}\left(f_k(B\cap\proj_{V(k\bi)}^{-1}(t))\right)d\lambda(t)\\
	&=\|A_k^*|V(\bi)\|\|A_k|V(k\bi)^\perp\|^{s_0-1}\int_{\proj_{V(k\bi)}(B)} \cH^{s_0-1}\left(B\cap\proj_{V(k\bi)}^{-1}(t)\right)d\lambda(t)\\
	&=\|A_k^*|V(\bi)\|\|A_k|V(k\bi)^\perp\|^{s_0-1}\int\cH^{s_0-1}\left(B\cap\proj_{V(k\bi)}^{-1}(t)\right)d\lambda(t).
\end{align*}
\end{proof}

\subsection{An alternative form of the K\"aenm\"aki measure}

For every $\bi\in\Sigma$, let us define a measure on $\Sigma$ as follows: for every $\bj\in\Sigma_*$
$$
\eta_\bi([\bj]):=\int \cH^{s_0-1}\left(f_{\bj}(X)\cap\proj_{V(\bi)}^{-1}(t)\right)d\lambda(t).
$$
First, we will show that $\eta_\bi$ can be extended to a well-defined Borel measure on $\Sigma$. (Note that $\eta_\bi$ might be the zero measure.) To do so, it is enough to show the following lemma:

\begin{lemma}\label{prop:etameas}
	For every $\bj\in\Sigma_*$, and $\bi\in\Sigma$
	$$
	\eta_{\bi}([\bj])=\sum_{k\in\cA}\eta_{\bi}([\bj k]).
	$$
\end{lemma}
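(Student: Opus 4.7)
The plan is to decompose $f_{\bj}(X)$ using the invariance relation and then control the overlaps using \cref{lem:intersection0}. Concretely, from the IFS identity $X=\bigcup_{k\in\cA}f_k(X)$ we get
\[
f_{\bj}(X)=\bigcup_{k\in\cA}f_{\bj k}(X),
\]
so for every $t\in\bbR$ and every $\bi\in\Sigma$,
\[
f_{\bj}(X)\cap\proj_{V(\bi)}^{-1}(t)=\bigcup_{k\in\cA}\bigl(f_{\bj k}(X)\cap\proj_{V(\bi)}^{-1}(t)\bigr).
\]
First I would use finite subadditivity of the Hausdorff measure to get the pointwise bound $\cH^{s_0-1}(f_{\bj}(X)\cap\proj_{V(\bi)}^{-1}(t))\le\sum_{k\in\cA}\cH^{s_0-1}(f_{\bj k}(X)\cap\proj_{V(\bi)}^{-1}(t))$, and integration in $t$ then gives the easy inequality $\eta_{\bi}([\bj])\le\sum_{k\in\cA}\eta_{\bi}([\bj k])$.

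For the reverse direction, the key observation is that for distinct $k\neq k'\in\cA$ the cylinders $[\bj k]$ and $[\bj k']$ are disjoint, so \cref{lem:intersection0} applies to give
\[
\cH^{s_0-1}\bigl(f_{\bj k}(X)\cap f_{\bj k'}(X)\cap\proj_{V(\bi)}^{-1}(t)\bigr)=0\quad\text{for $\lambda$-a.e. $t$.}
\]
Since $\cA$ is finite, a union bound over the finitely many pairs $(k,k')$ with $k\neq k'$ produces a single full-$\lambda$-measure set of $t$ on which all pairwise intersections are Hausdorff-null on the slice. On this set, $\cH^{s_0-1}$ being an outer measure that is additive on disjoint (mod null) finite Borel decompositions gives
\[
\cH^{s_0-1}\bigl(f_{\bj}(X)\cap\proj_{V(\bi)}^{-1}(t)\bigr)=\sum_{k\in\cA}\cH^{s_0-1}\bigl(f_{\bj k}(X)\cap\proj_{V(\bi)}^{-1}(t)\bigr).
\]
Integrating this equality in $t$ over $\bbR$ yields $\eta_{\bi}([\bj])=\sum_{k\in\cA}\eta_{\bi}([\bj k])$, which combined with the previous inequality completes the proof.

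The only non-routine step is the reduction of the overlap term to zero via \cref{lem:intersection0}, and that is already done for us; everything else is measure-theoretic bookkeeping. A minor technicality worth mentioning in the writeup is measurability of $t\mapsto\cH^{s_0-1}(f_{\bj k}(X)\cap\proj_{V(\bi)}^{-1}(t))$, which follows exactly as in the proof of \cref{prop:Hausmeas}: the integrand is finite (by \cref{eq:contismeas}), each $f_{\bj k}(X)$ is compact, and one approximates $\cH^{s_0-1}$ by $\cH^{s_0-1}_\delta$ using the fact that $\diam(f_{\bj k}(X))\to0$ as $|\bj k|\to\infty$ so that for $|\bj k|$ sufficiently large, $\cH^{s_0-1}_\delta$ coincides with $\cH^{s_0-1}_\infty$ on each cylinder image. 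Thus no truly new difficulty arises, and the lemma follows.
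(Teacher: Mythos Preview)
Your proof is correct. Both your argument and the paper's rely on \cref{lem:intersection0} as the essential input, but the routes differ slightly. The paper first applies \cref{lem:tomeas2} to pull back by $f_{\bj}$, reducing to the identity $\sum_{k\in\cA}\int\cH^{s_0-1}(f_k(X)\cap\proj_{V(\aj\bi)}^{-1}(t))\,d\lambda(t)=\int\cH^{s_0-1}(X\cap\proj_{V(\aj\bi)}^{-1}(t))\,d\lambda(t)$, and then invokes \cref{lem:intersection0} at the first level; another application of \cref{lem:tomeas2} recovers $\eta_{\bi}([\bj])$. You instead apply \cref{lem:intersection0} directly to the cylinders $[\bj k]$, which is more economical since it avoids \cref{lem:tomeas2} altogether. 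The paper's detour is natural given that \cref{lem:tomeas2} is needed elsewhere anyway, but your direct additivity argument is cleaner for this particular lemma.
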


\begin{proof}
	By \cref{lem:intersection0} and \cref{lem:tomeas2}, it follows that
	\begin{align*}
		\sum_{k\in\cA}\eta_\bi([\bj k])&=\sum_{k\in\cA}\int \cH^{s_0-1}(f_{\bj k}(X)\cap \proj_{V(\bi)}^{-1}(t))d\lambda(t)\\
		&= \|A_{\bj}^*|V(\bi)\|\|A_{\bj}|V(\aj\bi)^\perp\|^{s_0-1}\sum_{k\in\cA}\int\cH^{s_0-1}(f_{k}(X)\cap \proj_{V(\aj\bi)}^{-1}(t))d\lambda(t)\\
		&=\|A_{\bj}^*|V(\bi)\|\|A_{\bj}|V(\aj\bi)^\perp\|^{s_0-1}\int\cH^{s_0-1}(X\cap \proj_{V(\aj\bi)}^{-1}(t))d\lambda(t)\\
		&=\int \cH^{s_0-1}(f_{\bj}(X)\cap \proj_{V(\bi)}^{-1}(t))d\lambda(t)=\eta_{\bi}([\bj]).
	\end{align*}
\end{proof}

{ Since} $\eta_{\bi}$ is a Borel measure on $\Sigma$, by \cref{lem:intersection0} and the fact that the Borel $\sigma$-algebra on $X$ is the smallest $\sigma$-algebra generated by the sets $\{f_{\bi}(X)\}_{\bi\in\Sigma_*}$, we get that for every $\bi\in\Sigma$ and every Borel subset $B\subseteq X$
\begin{equation}\label{eq:etameas}
	\pi_*\eta_{\bi}(B)=\int\cH^{s_0-1}(B\cap\proj_{V(\bi)}^{-1}(t))d\lambda(t).
\end{equation}

Now, we show the dichotomy that $\eta_\bi$ is either trivial for every $\bi\in\Sigma$, i.e. it is the uniformly zero measure or it is uniformly equivalent to the K\"aenm\"aki measure for every $\bi\in\Sigma$.

\begin{proposition}\label{prop:equivKaenmaki}
	For every $\bi\in\Sigma$, the measure $\eta_{\bi}$ is not the uniformly zero measure on $\Sigma$ if and only if $\inf_{\bi\in\Sigma}\int\cH^{s_0-1}(X\cap\proj_{V(\bi)}^{-1}(t))d\lambda(t)>0$.
	
	Moreover, if  $\inf_{\bi\in\Sigma}\int\cH^{s_0-1}(X\cap\proj_{V(\bi)}^{-1}(t))d\lambda(t)>0$ then there exists a constant $C>0$ such that for every $\bj\in\Sigma_*$ and $\bi\in\Sigma$
	$$
	C^{-1}\mu_K([\bj])\leq\eta_{\bi}([\bj])\leq C\mu_K([\bj]).
	$$
\end{proposition}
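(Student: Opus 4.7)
The argument rests on one closed-form identity, obtained by iterating \cref{lem:tomeas2}. Writing $f_\bj(X)=f_{j_1}(f_{j_2\cdots j_n}(X))$ and applying \cref{lem:tomeas2} inductively---peeling off $j_1,j_2,\ldots,j_n$ in turn---each step introduces a pair of scalar factors and rotates the slicing direction from $V(j_{l-1}\cdots j_1\bi)$ to $V(j_l\cdots j_1\bi)$. Using the recursions $V(k\bi')=A_k^*V(\bi')$ and $A_kV(k\bi')^\perp=V(\bi')^\perp$, the two accumulated products telescope:
\[
\prod_{l=1}^{n}\|A_{j_l}^*|V(j_{l-1}\cdots j_1\bi)\|=\|A_\bj^*|V(\bi)\|,\qquad \prod_{l=1}^{n}\|A_{j_l}|V(j_l\cdots j_1\bi)^\perp\|=\|A_\bj|V(\aj\bi)^\perp\|,
\]
where $\aj=(j_n,\ldots,j_1)$; the former uses $A_{j_n}^*\cdots A_{j_1}^*=A_\bj^*$ and the latter is the analogous dual identity. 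The residue at the end of the iteration is $h(\aj\bi)$ by \cref{prop:Hausmeas}, so
\[
\eta_\bi([\bj])=\|A_\bj^*|V(\bi)\|\cdot\|A_\bj|V(\aj\bi)^\perp\|^{s_0-1}\cdot h(\aj\bi).
\]

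The first assertion of the proposition is then immediate: taking $\bj=\emptyset$ gives $\eta_\bi(\Sigma)=h(\bi)$, and \cref{prop:Hauscontent} already supplies the dichotomy that either $h\equiv 0$ or $\inf_{\bi\in\Sigma}h(\bi)>0$. Hence $\eta_{\bi}$ fails to be uniformly zero precisely when $\inf_{\bi\in\Sigma}h(\bi)>0$.

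For the second assertion, assume $\inf h>0$. Since $h(\bi)=p(\bi)\int h\,d\nu$ with $p\colon\Sigma\to(0,\infty)$ continuous on the compact space $\Sigma$, the function $h$ is bounded between two positive constants; moreover the crude estimate $h(\bi)\leq|X|^{s_0}$ follows from $\cH^{s_0-1}_\infty(X\cap\proj_V^{-1}(t))\leq|X|^{s_0-1}$ together with $|\proj_V(X)|\leq|X|$. Since $V(\bi)\in X_F\subseteq\bigcup_{i\in\cA}A_i^*\cC$, estimate \eqref{eq:domin} yields $\|A_\bj^*|V(\bi)\|\asymp\alpha_1(A_\bj)$ and $\|A_\bj|V(\aj\bi)^\perp\|=\|A_\bj^{-1}|V(\bi)^\perp\|^{-1}\asymp\alpha_2(A_\bj)$; combined with $\mu_K([\bj])\asymp\varphi^{s_0}(A_\bj)=\alpha_1(A_\bj)\alpha_2(A_\bj)^{s_0-1}$ from \eqref{eq:Kaenmakimeas}, the closed form gives $\eta_\bi([\bj])\asymp\mu_K([\bj])$ with constants independent of $\bi$ and $\bj$. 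The specific upper bound $\eta_\bi([\bj])\leq|X|^{s_0}\mu_K([\bj])$ is extracted by inserting $h(\aj\bi)\leq|X|^{s_0}$ together with the upper halves of the two asymptotic equivalences; the lower bound $\eta_\bi([\bj])\geq C\mu_K([\bj])$ for some $C>0$ comes from their lower halves combined with $\inf h>0$.

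The main delicacy is the telescoping in the first step: the products of \emph{different} norms $\|A_{j_l}^*|V(j_{l-1}\cdots j_1\bi)\|$ collapse to a single norm $\|A_\bj^*|V(\bi)\|$ only because each $A_{j_l}^*$ maps the current direction $V(j_{l-1}\cdots j_1\bi)$ precisely onto the next direction $V(j_l\cdots j_1\bi)$, by the recursion for $V(\cdot)$, and symmetrically for the $V^\perp$-products using the dual recursion. Once this bookkeeping is in place, the remainder is a routine assembly of the estimates from \cref{sec:perron} and \eqref{eq:domin}, \eqref{eq:Kaenmakimeas}.
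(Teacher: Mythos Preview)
Your proof is correct and follows essentially the same route as the paper: both derive the closed form $\eta_\bi([\bj])=\|A_\bj^*|V(\bi)\|\,\|A_\bj|V(\aj\bi)^\perp\|^{s_0-1}\,h(\aj\bi)$ by iterating \cref{lem:tomeas2} (the paper leaves the telescoping implicit), then feed in \eqref{eq:domin}, \eqref{eq:Kaenmakimeas}, and the dichotomy of \cref{prop:Hauscontent}. Your treatment of the first assertion via $\eta_\bi(\Sigma)=h(\bi)$ is slightly more direct than the paper's, and note that---just as in the paper's own proof---the displayed upper bound really comes out as $C'|X|^{s_0}\mu_K([\bj])$ for some constant $C'$ arising from \eqref{eq:domin} and \eqref{eq:Kaenmakimeas}, not the bare $|X|^{s_0}$ written in the statement.
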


\begin{proof}
	Observe that by \cref{lem:tomeas2} and the combination of \cref{prop:Hauscontent} and \cref{prop:Hausmeas}, we get
	\begin{align*}
	\eta_{\bi}([\bj])&=\|A_{\bj}^*|V(\bi)\|\|A_{\bj}|V(\aj\bi)^\perp\|^{s_0-1}\int\cH^{s_0-1}(X\cap \proj_{V(\aj\bi)}^{-1}(t))d\lambda(t)\\
	&=\|A_{\bj}^*|V(\bi)\|\|A_{\bj}|V(\aj\bi)^\perp\|^{s_0-1}\int\cH^{s_0-1}_\infty(X\cap \proj_{V(\aj\bi)}^{-1}(t))d\lambda(t)\\
	&\leq \|A_{\bj}^*|V(\bi)\|\|A_{\bj}|V(\aj\bi)^\perp\|^{s_0-1}|X|^{s_0}\\
	&\leq C\alpha_1(A_{\bj})\alpha_2(A_{\bj})^{s_0-1}\leq C'\mu_K([\bj]),
	\end{align*}
	where in the last two inequalities we used \cref{eq:Kaenmakimeas} and \cref{eq:domin}. Similarly,
	\begin{align*}
		\eta_{\bi}([\bj])&=\|A_{\bj}^*|V(\bi)\|\|A_{\bj}|V(\aj\bi)^\perp\|^{s_0-1}\int\cH^{s_0-1}_\infty(X\cap \proj_{V(\aj\bi)}^{-1}(t))d\lambda(t)\\
		&=\|A_{\bj}^*|V(\bi)\|\|A_{\bj}|V(\aj\bi)^\perp\|^{s_0-1}\cdot p(\aj\bi)\cdot\iint\cH^{s_0-1}_\infty(X\cap \proj_{V(\bi)}^{-1}(t))d\lambda(t)d\nu(\bi)\\
		&\geq C''\mu_K([\bj])\cdot\iint\cH^{s_0-1}_\infty(X\cap \proj_{V(\bi)}^{-1}(t))d\lambda(t)d\nu(\bi)\cdot\inf_{\bi\in\Sigma}p(\bi).
	\end{align*}
Now, $\iint\cH^{s_0-1}_\infty(X\cap \proj_{V(\bi)}^{-1}(t))d\lambda(t)d\nu(\bi)>0$ if and only if $\inf\limits_{\bi\in\Sigma}\int\cH^{s_0-1}_\infty(X\cap \proj_{V(\bi)}^{-1}(t))d\lambda(t)>0$ by \cref{prop:Hauscontent}, which completes the proof.
\end{proof}

Now, we consider a more sophisticated version of \cref{eq:Kaenmakimeas}.

\begin{proposition}\label{prop:equivKaenmaki2}
	If  $\inf_{\bi\in\Sigma}\int\cH^{s_0-1}(X\cap\proj_{V(\bi)}^{-1}(t))d\lambda(t)>0$ then for every Borel subset $B\subseteq X$
	$$
	\pi_*\mu_K(B)=\frac{\iint\cH^{s_0-1}(B\cap\proj_{V(\bi)}^{-1}(t))d\lambda(t)d\nu(\bi)}{\iint\cH^{s_0-1}(X\cap\proj_{V(\bi)}^{-1}(t))d\lambda(t)d\nu(\bi)}.
	$$
\end{proposition}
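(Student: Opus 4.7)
My plan is to introduce a finite Borel measure $\tilde\mu$ on $\Sigma$ by $\tilde\mu([\bj]):=\int \eta_{\bi}([\bj])\,d\nu(\bi)$ for $\bj\in\Sigma_*$, prove that $\tilde\mu$ is a constant multiple of the K\"aenm\"aki measure $\mu_K$, and then push everything forward under $\pi$. That this prescription on cylinders extends to a genuine Borel measure on $\Sigma$ follows from \cref{prop:etameas} and Fubini; moreover, \cref{eq:etameas} combined with Fubini immediately yields
$$
\pi_*\tilde\mu(B)=\iint \cH^{s_0-1}\bigl(B\cap\proj_{V(\bi)}^{-1}(t)\bigr)\,d\lambda(t)\,d\nu(\bi)
$$
for every Borel $B\subseteq X$, which is exactly the numerator of the claimed identity.

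The crucial step is to verify that $\tilde\mu$ is left-shift invariant on $\Sigma$. Fix $\bj\in\Sigma_*$ and set $H_{\bj}(\bi):=\eta_{\bi}([\bj])$; this is bounded and upper semi-continuous in $\bi$ (by the same compactness argument as in \cref{lem:semicont}, applied with $f_\bj(X)$ in place of $X$). Applying \cref{lem:tomeas2} with $B=f_\bj(X)$ identifies
$$
\sum_{k\in\cA}\eta_{\bi}([k\bj])=\sum_{k\in\cA}\|A_k^*|V(\bi)\|\,\|A_k|V(k\bi)^\perp\|^{s_0-1}\eta_{k\bi}([\bj])=(\cL H_\bj)(\bi).
$$
Integrating against $\nu$ and invoking $\cL^*\nu=\nu$ from Ruelle's Perron--Frobenius theorem (\cref{sec:perron}) then gives $\sum_k\tilde\mu([k\bj])=\tilde\mu([\bj])$, i.e.\ $\sigma$-invariance on cylinders, which upgrades to $\sigma$-invariance on all Borel sets.

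Once shift-invariance is established, I would close the argument as follows. By \cref{prop:equivKaenmaki} there exists $C>0$ with $C\mu_K([\bj])\le\tilde\mu([\bj])\le|X|^{s_0}\mu_K([\bj])$ on every cylinder, and a standard covering argument (approximating Borel sets from outside by disjoint cylinder unions) upgrades this to mutual absolute continuity $\tilde\mu\sim\mu_K$ on the full Borel $\sigma$-algebra. Since $\mu_K$ is ergodic under $\sigma$, any $\sigma$-invariant finite measure absolutely continuous with respect to $\mu_K$ must be a constant multiple of it, so $\tilde\mu=c\mu_K$ for some $c>0$. Evaluating at $B=X$ pins down
$$
c=\tilde\mu(\Sigma)=\iint\cH^{s_0-1}\bigl(X\cap\proj_{V(\bi)}^{-1}(t)\bigr)\,d\lambda(t)\,d\nu(\bi),
$$
which is the denominator of the statement; pushing $\tilde\mu=c\mu_K$ forward by $\pi$ and dividing through by $c$ then gives the claim. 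The main obstacle I anticipate is precisely the shift-invariance step: recognising the Perron--Frobenius identity $\sum_k\eta_{\bi}([k\bj])=(\cL H_\bj)(\bi)$ is the only place where the fine structure from \cref{sec:perron} is genuinely used, and without that identification there is no direct route from the two-sided cylinder bound of \cref{prop:equivKaenmaki} to an exact proportionality between $\tilde\mu$ and $\mu_K$.
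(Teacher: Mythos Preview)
Your proposal is correct and follows essentially the same route as the paper: both define the measure $\tilde\mu=\int\eta_{\bi}\,d\nu(\bi)$ on $\Sigma$, verify $\sigma$-invariance on cylinders via \cref{lem:tomeas2} and the Perron--Frobenius relation $\cL^*\nu=\nu$, use \cref{prop:equivKaenmaki} to get equivalence with $\mu_K$, and then invoke ergodicity of $\mu_K$ to conclude exact proportionality before pushing forward by $\pi$. The only cosmetic difference is that the paper phrases the last step as ``$\tilde\mu/\tilde\mu(\Sigma)$ is itself ergodic, and two equivalent ergodic probability measures coincide'', whereas you argue that an invariant measure absolutely continuous with respect to an ergodic one is a scalar multiple---these are the same fact.
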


\begin{proof}
	First, we will show that the K\"aenm\"aki measure $\mu_K$ equals to the measure $\gamma:=\tfrac{\int\eta_\bi d\nu(\bi)}{\int\eta_\bi(X)d\nu(\bi)}$ on $\Sigma$. By \cref{eq:Kaenmakimeas} and \cref{prop:equivKaenmaki}, $\gamma$ is equivalent to $\mu_K$, and so, it is enough to show that $\gamma$ is $\sigma$-invariant. Indeed, if $B$ is such that $\sigma^{-1}B=B$ then either $\mu_K(B)=0$ or $\mu_K(B^c)=0$, but then by \cref{eq:Kaenmakimeas}, either $\gamma(B)=0$ or $\gamma(B^c)=0$, which implies the ergodicity of $\gamma$, and since ergodic probability measures are either singular or equal, the claim follows.
	
	The invariance is enough to be verified over cylinder sets. For simplicity, let us denote for a finite word $\bj\in\Sigma_*$ the function $\bi\mapsto\int\cH^{s_0-1}\left(f_{\bj}(X)\cap\proj_{V(\bi)}^{-1}(t)\right)d\lambda(t)$ by $\tilde{h}_{\bj}(\bi)$. Thus, by \cref{lem:tomeas2}
	\begin{align*}
	\sum_{k\in\cA}\int\tilde{h}_{k\bj}(\bi)d\nu(\bi)=&\sum_{k\in\cA}\iint\cH^{s_0-1}(f_{k\bj}(X)\cap\proj_{V(\bi)}^{-1}(t))d\lambda(t)d\nu(\bi)\\
		&=\int\sum_{k\in\cA}\|A_k^*|V(\bi)\|\|A_k|V(k\bi)^\perp\|^{s_0-1}\int\cH^{s_0-1}\left(f_{\bj}(X)\cap\proj_{V(k\bi)}^{-1}(t)\right)d\lambda(t)d\nu(\bi)\\
		&=\int(\cL \tilde{h}_{\bj})(\bi)d\nu(\bi)=\int \tilde{h}_{\bj}(\bi)d(\cL^*\nu)(\bi)=\int \tilde{h}_{\bj}(\bi)d\nu(\bi).
	\end{align*}
The claim follows then by \cref{eq:etameas}.
\end{proof}

\section{Characterisation of positive measure}

This section is devoted to prove our main theorems. Let us note that Marstrand \cite{Marstrand1954b} showed that for any Borel subset $E\subset\bbR$ and every subspace $V\in\bbRP^1$
$$
\cH^{s}(E)\geq\int_{\proj_V(E)}\cH^{s-1}(E\cap\proj_V^{-1}(t))d\lambda_V(t).
$$
Hence, item \eqref{it:posintexist} implies item \eqref{it:posmeas} in \cref{thm:main}. Our first main lemma shows that a kind of reversed inequality holds for self-affine sets.

\begin{lemma}\label{lem:hausintform}
	Let $\Phi=\{f_i(x)=A_ix+t_i\}_{i\in\cA}$ be a dominated planar IFS of affinities. Let $X$ be the attractor of $\Phi$ and let $s_0\in(1,2]$ be the affinity dimension. Then there exists a constant $C>0$ such that
$$
\cH^{s_0}(X)\leq C\max_{\bi\in\Sigma}\int\cH^{s_0-1}_\infty\left(X\cap\proj_{V(\bi)}^{-1}(t)\right)d\lambda(t).
$$	
\end{lemma}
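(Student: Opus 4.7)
The plan is to build an efficient cover of $X$ using the natural cylinder partition at scale $r$ together with a slicing cover in a direction maximising $h$. By \cref{prop:Hauscontent} and continuity of $p$ on the compact space $\Sigma$, the function $h(\bi) = p(\bi) \int h\, d\nu$ is continuous, so there exists $\bi^* \in \Sigma$ with $h(\bi^*) = \max_{\bi \in \Sigma} h(\bi)$; set $V^* = V(\bi^*)$. For each small $r > 0$ I decompose $X = \bigcup_{\bj \in \Delta_r} f_\bj(X)$. Under domination, \eqref{eq:domin} gives $\|A_\bj^*|V^*\| \approx \alpha_1(A_\bj)$ and $\|A_\bj|V(\aj\bi^*)^\perp\| \approx \alpha_2(A_\bj)$ up to universal multiplicative constants, so each $f_\bj(X)$ lies in a thin rectangle with $V^*$-extent $\approx \alpha_1(A_\bj)$ and $(V^*)^\perp$-extent at most a constant times $\alpha_2(A_\bj) \leq r$.

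For each $\bj \in \Delta_r$ I partition $\proj_{V^*}(f_\bj(X))$ into $N_\bj \leq \lceil\alpha_1(A_\bj)/r\rceil + 1$ consecutive intervals $J_1, \ldots, J_{N_\bj}$ of length $r$, and for each $k$ pick $t_k \in J_k$ together with a near-optimal cover $\{U_i^{\bj,k}\}_i$ of the slice $f_\bj(X) \cap \proj_{V^*}^{-1}(t_k)$ satisfying $\sum_i |U_i^{\bj,k}|^{s_0-1} \leq \cH^{s_0-1}_\infty(f_\bj(X) \cap \proj_{V^*}^{-1}(t_k)) + \varepsilon_{\bj,k}$. Since slices of $f_\bj(X)$ perpendicular to $V^*$ have diameter at most a constant times $r$ by the extent estimate above, I can arrange each $|U_i^{\bj,k}|$ to be comparable to $r$; then inflating $U_i^{\bj,k}$ by an interval of length $r$ in direction $V^*$ yields $\widetilde U_i^{\bj,k}$ with $|\widetilde U_i^{\bj,k}| \leq Cr$.

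The slab-midpoints $t_k \in J_k$ are chosen using Fubini so that $r\sum_k \cH^{s_0-1}_\infty(f_\bj(X) \cap \proj_{V^*}^{-1}(t_k))$ is bounded by a constant times $\int \cH^{s_0-1}_\infty(f_\bj(X) \cap \proj_{V^*}^{-1}(t))\, d\lambda(t) = \eta_{\bi^*}([\bj])$: the expected value of the Riemann sum with $t_k$ sampled uniformly in $J_k$ equals $\eta_{\bi^*}([\bj])$, so some choice realises this bound. Combined with the elementary inequality $(|U|+r)^{s_0} \leq Cr\,|U|^{s_0-1}$ valid for $|U|$ comparable to $r$ and $s_0 \in (1,2]$, this yields
\[
\sum_{k,i}|\widetilde U_i^{\bj,k}|^{s_0} \leq Cr\sum_{k,i}|U_i^{\bj,k}|^{s_0-1} \leq Cr\sum_k \cH^{s_0-1}_\infty(f_\bj(X)\cap\proj_{V^*}^{-1}(t_k)) \leq C\,\eta_{\bi^*}([\bj]).
\]
Summing over $\bj \in \Delta_r$ and invoking $\sum_{\bj \in \Delta_r} \eta_{\bi^*}([\bj]) = \eta_{\bi^*}(\Sigma) = h(\bi^*)$ (by iterated application of \cref{prop:etameas}) yields $\cH^{s_0}_{Cr}(X) \leq C\, h(\bi^*) = C \max_{\bi \in \Sigma} h(\bi)$, and letting $r \to 0$ completes the argument.

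The main obstacle is to verify that the inflated family $\{\widetilde U_i^{\bj,k}\}_i$ really does cover the full slab $f_\bj(X) \cap \proj_{V^*}^{-1}(J_k)$, rather than only the single slice at $t_k$: slices of $f_\bj(X)$ at neighbouring $V^*$-levels may sit in different $(V^*)^\perp$-positions. Resolving this step requires exploiting the affine rigidity of $f_\bj$---via $f_\bj^{-1}$ the slab of $V^*$-width $r$ corresponds to a slab of $V(\aj\bi^*)$-width $r/\alpha_1(A_\bj)$ in $X$---together with a uniform compactness argument (using the H\"older continuity of $\bi \mapsto V(\bi)$) to extend the pointwise slice cover to a robust slab cover, absorbing the overhead into the constant $C$.
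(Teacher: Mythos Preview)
Your argument has a genuine gap at the step you yourself flag as ``the main obstacle'', and the proposed resolution does not work. The problem is already visible one step earlier: you assert that you ``can arrange each $|U_i^{\bj,k}|$ to be comparable to $r$'' while simultaneously keeping $\sum_i |U_i^{\bj,k}|^{s_0-1}$ within $\varepsilon_{\bj,k}$ of the content $\cH^{s_0-1}_\infty(f_\bj(X)\cap\proj_{V^*}^{-1}(t_k))$. These two requirements are incompatible. The slice has diameter $\lesssim r$, so the upper bound $|U_i^{\bj,k}|\leq Cr$ is free; but forcing $|U_i^{\bj,k}|\geq cr$ means any cover of a non-empty slice costs at least $(cr)^{s_0-1}$, whereas the $(s_0-1)$-content can be arbitrarily small (and is typically zero on a positive-measure set of $t$'s, since $s_0-1\in(0,1]$). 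Precisely this lower bound is what you need for the inequality $(|U|+r)^{s_0}\leq Cr\,|U|^{s_0-1}$, which is simply false for $|U|\ll r$. Without it the inflated sets $\widetilde U_i^{\bj,k}$ either fail to cover the slab or have $s_0$-cost much larger than $r|U_i^{\bj,k}|^{s_0-1}$, and your chain of inequalities collapses. If you try to patch by adding one ball of radius $r$ per slab, the total extra cost is $\sum_{\bj\in\Delta_r}N_\bj\, r^{s_0}\approx\sum_{\bj}\alpha_1(A_\bj)\alpha_2(A_\bj)^{s_0-1}\approx 1$ via \eqref{eq:Kaenmakimeas}, not $\approx h(\bi^*)$; so you lose the conclusion exactly in the case that matters, $h\equiv 0$.

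Your ``uniform compactness'' sketch does not repair this. Pulling back by $f_\bj^{-1}$ converts the task into covering a slab of $X$ of $V(\aj\bi^*)$-width $r/\alpha_1(A_\bj)$, but there is no uniform $\rho>0$ such that every near-optimal open cover of every slice $X\cap\proj_V^{-1}(t)$ extends to the $\rho$-slab: the extension radius depends on the particular cover chosen and on $(V,t)$, and can be arbitrarily small. The paper's proof confronts exactly this non-uniformity. It works at the original scale rather than on the cylinders: for each $(\bi,t)$ it fixes a specific finite open cover of $X\cap\proj_{V(\bi)}^{-1}(t)$ and takes $r(\bi,t)>0$ to be its genuine extension radius, then applies Besicovitch's covering theorem to select $Q$ disjoint families of balls $B(t,r(\bi,t))$ covering $\proj_{V(\bi)}(X)$. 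The resulting rectangles $\tilde U_{t,i}=U_{i,t}\times B(t,r(\bi,t))$ are then pushed \emph{forward} by $f_{\ai}$ and chopped into lozenges of the right aspect ratio. The $\bi$-dependence of the depth $N(\bi)$ at which this becomes an honest $\delta$-cover is handled by an Egorov-type argument against the measure $\mu_F$: cylinders outside a set of $\mu_F$-mass $<\epsilon$ are ``good'', and the bad ones are covered crudely at cost $\lesssim\epsilon$. This mechanism---Besicovitch to cover with balls of the \emph{correct} (non-uniform) radii, plus a measure-theoretic control of the exceptional cylinders---is the substantive content of the lemma, and there is no shortcut via uniform slab widths.
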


\begin{proof}
{	Let $\epsilon>0$ be arbitrary but fixed. By \cref{lem:semicont0}, the map $(V,t)\mapsto\cH_\infty^{s_0-1}\left(X\cap\proj_{V}^{-1}(t)\right)$ is upper semi-continuous, and so, by \cite[Theorem~2.1.3]{Ransford} and the monotone convergence theorem, there exists a continuous function $a_\bi\colon\bbR\to\bbR$ such that $\cH_\infty^{s_0-1}\left(X\cap\proj_{V(\bi)}^{-1}(t)\right)\leq a_\bi(t)$ and\linebreak $\int a_\bi(t)d\lambda(t)\leq\int\cH^{s_0-1}_\infty\left(X\cap\proj_{V(\bi)}^{-1}(t)\right)d\lambda(t)+\epsilon$. We may choose $a_{\bi}$ such that its support is contained in $[-1,1]$, and so, there exists $\delta(\bi)>0$ such that for every $t,t'\in\bbR$ if $|t-t'|<\delta$ then $|a_\bi(t)-a_{\bi}(t')|<\epsilon$.}
	
	For every $(\bi,t)\in\Sigma\times\bbR$, let $\{U_{i,t}\}_{i\in\cI_{\bi,t}}$ be a cover of $X\cap\proj_{V(\bi)}^{-1}(t)$ by open intervals in $\proj_{V(\bi)}^{-1}(t)$ such that $\sum_{i\in\cI_{\bi,t}}|U_{i,t}|^{s_0-1}\leq\cH^{s_0-1}_\infty(X\cap \proj_{V(\bi)}^{-1}(t))+\epsilon$. By the compactness, we may assume that $\cI(\bi,t)$ is finite. Then for every $(\bi,t)$, there exists $r(\bi,t)>0$ such that for every $|t-t'|<r(\bi,t)$, $X\cap \proj_{V(\bi)}^{-1}(t')\subseteq\bigcup_{i\in\cI_{\bi,t}}U_{i,t}$. We may also assume that $r(\bi,t)\leq\delta(\bi)$ by possibly taking minimum.
	
	By applying Besicovitch's covering theorem, there exists a $Q\geq1$ (independent of the quantities above) such that there exists $\cB_1(\bi),\ldots,\cB_Q(\bi)$ collection of points such that
	\begin{itemize}
		\item $\proj_{V(\bi)}(X)\subseteq\bigcup_{i=1}^Q\bigcup_{t\in\cB_i(\bi)}B(t,r(\bi,t))$,
		\item $B(t,r(\bi,t))\cap B(t',r(\bi,t'))=\emptyset$ for every $i=1,\ldots,Q$ and $t\neq t'\in\cB_i(\bi)$.
	\end{itemize}	
	Since $\proj_{V(\bi)}(X)$ is compact, there exists finite subsets $\cB_i'(\bi)\subseteq\cB_i(\bi)$ such that $\proj_{V(\bi)}(X)\subseteq\bigcup_{i=1}^Q\bigcup_{t\in\cB_i'(\bi)}B(t,r(\bi,t))$. Now, since $\bigcup_{i=1}^Q\cB_i'(\bi)$ is finite there exists $N=N(\bi)$ such that for every $n\geq N(\bi)$
	$$
{	\frac{\|A_{\ai|_n}|V(\bi)^\perp\|}{\|A_{\ai|_n}|V(\bi)\|}}\cdot|X|\leq\min_{t\in\bigcup_{i=1}^Q\cB_i'}r(\bi,t),
	$$
	where we recall that $\ai|_n=(i_n,\ldots,i_1)$ for $\bi=(i_1,i_2,\ldots)$. For every $t\in\bigcup_{i=1}^Q\cB_i'(\bi)$, and $j\in\cI(\bi,t)$ let $\tilde{U}_{t,i}=U_{i,t}\times B(t,r(\bi,t))$ be the rectangle, axes parallel to $V(\bi)$ and $V(\bi)^\perp$. By the construction, $\bigcup_{t\in\bigcup_{i=1}^Q\cB_i'(\bi)}\bigcup_{i\in\cI_{\bi,t}}\tilde{U}_{t,i}$ is a cover of $X$.

	{ Unfortunately, the function $\bi\mapsto N(\bi)$ is not necessarily measurable. However, the function $N_m(\bi):=\min\{N(\bj):\bj\in[\bi|_m]\}$ is constant over $m$-length cylinder sets, and thus, $M(\bi):=\limsup_{m\to\infty}N_m(\bi)$ is Borel measurable. Furthermore, $M(\bi)\leq N(\bi)$ for every $\bi\in\Sigma$, so $\bigcup_{k=1}^\infty\{\bi:M(\bi)\leq k\}=\Sigma$. Therefore, there exists $M\geq1$ such that
	$$
	\epsilon>\mu_F(\{\bi:M(\bi)> M\})=\mu_F\left(\bigcap_{m=1}^\infty\bigcup_{p=m}^\infty\{\bi:N_p(\bi)> M\}\right)\geq\lim_{m\to\infty}\mu_F(\{\bi:N_m(\bi)> M\})
	$$
	Hence, there exists $M'\geq M$ such that for every $m\geq M'$
	\begin{equation}\label{eq:smallmeas}
	\epsilon>\mu_F(\{\bi:N_m(\bi)> M\})\geq\mu_F(\{\bi:N_m(\bi)> m\}).
	\end{equation}

Now, we will construct our cover with diameters at most $(\max_i\|A_i\|)^m\cdot|X|$. Let $\cG_m:=\{\bi\in\Sigma_m:\text{ there exists }\bj\in[\bi]\text{ such that }N(\bj)\leq m\}$. By the definition of $N_m$, $\bigcup_{\bi\in\mathcal{G}_m^c}[\bi]=\{\bi\in\Sigma:N_m(\bi)>m\}$. Thus, $\mu_F(\bigcup_{\bi\in\mathcal{G}_m^c}[\bi])\leq\epsilon$ by \eqref{eq:smallmeas}. For every $\bi\in\cG_m$, let $\bi'\in\Sigma$ be such that $N(\bi')\leq m$ and $\bi'\in[\bi|_m]$.}
	
	For every $\bi\in\cG_m^c$, let us cover $f_{\ai}(X)$ with $\lceil\alpha_1(A_\ai)/\alpha_2(A_\ai)\rceil$-many rectangles with side length $\alpha_2(A_\ai)|X|$. For every $\bi\in\cG_m$, cover the parallelogram $f_{\ai}(\tilde{U}_{t,i})$ with $\left\lceil\tfrac{2\|A_{\ai}|V(\bi')\|r(\bi',t)}{\|A_{\ai}|V(\bi')^\perp\|\cdot |U_{i,t}|}\right\rceil$-many lozenge being axes parallel to the original with side length $\|A_{\ai}|V(\bi')^\perp\|\cdot |U_i|$. Since the system is dominated, { $A_{\ai}V(\bi')^\perp=V(\sigma^m\bi')^\perp$ and $A_{\ai}V(\bi')$} are uniformly transverse and there exists a constant $c>0$ (independent of the quantities above) the diameter of such lozenge is at most $c\|A_{\ai}|V(\bi')\|\cdot |U_i|$.
	
	Hence,
	\begin{align*}
		\cH^{s_0}_{(\max_i\|A_i\|)^m|X|}(X)&\leq\sum_{\bi\in\cG_m^c}\left\lceil\frac{\alpha_1(A_\ai)}{\alpha_2(A_\ai)}\right\rceil\left(\alpha_2(A_\ai)|X|\right)^{s_0}\\
		&\qquad\qquad+\sum_{\bi\in\cG_m}\sum_{i=1}^Q\sum_{t\in\bigcup\cB_i'(\bi')}\sum_{j\in\cI_{\bi',t}}\left\lceil\frac{2\|A_{\ai}|V(\bi')\|\cdot r(\bi',t)}{\|A_{\ai}|V(\bi')^\perp\|\cdot |U_{j,t}|}\right\rceil\left(c\|A_{\ai}|V(\bi')^\perp\|\cdot |U_{j,t}|\right)^{s_0}\\
		&\lesssim \mu_F\left(\bigcup_{\bi\in\cG_m^c}[\bi]\right)+\sum_{\bi\in\cG_m}\alpha_1(A_\ai)\alpha_2(A_\ai)^{s_0-1}\sum_{i=1}^Q\sum_{t\in\cB_i'(\bi')}r(\bi',t)\sum_{j\in\cI_{\bi',t}}|U_{j,t}|^{s_0-1}\\
		&\lesssim \epsilon+\sum_{\bi\in\cG_m}\alpha_1(A_\ai)\alpha_2(A_\ai)^{s_0-1}\sum_{i=1}^Q\sum_{t\in\cB_i'(\bi')}r(\bi',t)\left(\cH^{s_0-1}_\infty(\proj_{V(\bi')}^{-1}(t)\cap X)+\epsilon\right)\\
		&\leq\epsilon+\sum_{\bi\in\cG_m}\alpha_1(A_\ai)\alpha_2(A_\ai)^{s_0-1}\sum_{i=1}^Q\sum_{t\in\cB_i'(\bi')}r(\bi',t)\left(a_{\bi'}(t)+\epsilon\right)
		\intertext{by using that $r(\bi,t)\leq\delta(\bi)$ and the balls in $\cB_i'(\bi')$ are disjoint we get}
		&\leq\epsilon+\sum_{\bi\in\cG_m}\alpha_1(A_\ai)\alpha_2(A_\ai)^{s_0-1}Q\int\left(a_{\bi'}(t)+2\epsilon\right)d\lambda(t)\\
		&\leq\epsilon+\sum_{\bi\in\cG_m}\alpha_1(A_\ai)\alpha_2(A_\ai)^{s_0-1}Q\left( \epsilon(2|X|+1)+\int\cH^{s_0-1}_\infty\left(X\cap\proj_{V(\bi')}^{-1}(t)\right)d\lambda(t)\right)\\
		&\lesssim \epsilon+Q\left(\max_{\bi\in\Sigma}\int\cH^{s_0-1}_\infty\left(X\cap\proj_{V(\bi)}^{-1}(t)\right)d\lambda(t)+(2|X|+1)\epsilon\right)\cdot\mu_F\left(\bigcup_{\bi\in\cG_m}[\bi]\right),\\
	\end{align*}
	where we applied \cref{eq:Kaenmakimeas} many times and the assumption on the diameters $r(\bi,t)$. Since $m$ was arbitrary above, we get
	$$
	\cH^{s_0}(X)\lesssim \epsilon+\max_{\bi\in\Sigma}\int\cH^{s_0-1}_\infty\left(X\cap\proj_{V(\bi)}^{-1}(t)\right)d\lambda(t).
	$$
Since $\epsilon>0$ was arbitrary, the claim follows.	
\end{proof}

\begin{proof}[Proof of \cref{thm:main}]
	The implication \eqref{it:posmeas}$\Rightarrow$\eqref{it:posintexist} follows by \cref{lem:hausintform}. The equivalence \eqref{it:posintexist}$\Leftrightarrow$\eqref{it:posintevery} follows by \cref{prop:Hauscontent}.
	
	The implication \eqref{it:posintevery}$\Rightarrow$\eqref{it:massdist} follows by \cref{lem:contentismeas} and \cref{prop:equivKaenmaki2}. The implication \eqref{it:massdist}$\Rightarrow$\eqref{it:posmeas} follows by the mass distribution principle, see for example \cite[Theorem~4.2]{Falconer1990}.
\end{proof}

Now, we study the consequences of positive Hausdorff measure, and prove \cref{thm:maincor}.

\begin{proof}[Proof of \cref{thm:maincor}]
	First, we show that $\cH^{s_0}(X)>0$ implies \eqref{it:boundeddens}. Let $x\in\bbR$ and $r>0$ be arbitrary { and choose $n\geq1$ such that $\alpha_1(A_{\bi})|X|<r$ for every $\bi\in\Sigma_n$}. Then for every $\bj\in\Sigma$
	\begin{align*}
		(\proj_{V(\bj)})_*\pi_*\mu_K(B(x,r))&\leq\sum_{\substack{|\bi|=n\\f_{\bi}(X)\cap \proj_{V(\bj)}^{-1}(B(x,r))\neq\emptyset}}\mu_K([\bi])\\
		&\leq C^{-1}\sum_{\substack{|\bi|=n\\f_{\bi}(X)\cap \proj_{V(\bj)}^{-1}(B(x,r))\neq\emptyset}}\int \cH^{s_0-1}\left(f_{\bi}(X)\cap\proj_{V(\bj)}^{-1}(t)\right)d\lambda(t)
		\intertext{ by \cref{thm:main} and \cref{prop:equivKaenmaki}}
		&= C^{-1}\int \cH^{s_0-1}\left(\bigcup_{\substack{|\bi|=n\\f_{\bi}(X)\cap \proj_{V(\bj)}^{-1}(B(x,r))\neq\emptyset}}f_{\bi}(X)\cap\proj_{V(\bj)}^{-1}(t)\right)d\lambda(t)\text{ by \cref{lem:intersection0}}\\
		&\leq C^{-1}\int \cH^{s_0-1}\left(\proj_{V(\bj)}^{-1}(B(x,2r))\cap\proj_{V(\bj)}^{-1}(t)\right)d\lambda(t)\\
		&=C^{-1}\int \cH^{s_0-1}_\infty\left(\proj_{V(\bj)}^{-1}(B(x,2r))\cap\proj_{V(\bj)}^{-1}(t)\right)d\lambda(t)\leq C^{-1}|X|^{s_0-1}2r,
	\end{align*}
	where in the last equality we used \cref{lem:contentismeas}.
	
	Now, let us prove \eqref{it:boundedmeas}. For $r>0$, let $\Gamma_r=\{\bi\in\Sigma_*:\alpha_1(A_\bi)|X|\leq r<\alpha_1(A_{\bi_-})|X|\}$. Let $\bi\in\Sigma$, $r>0$ and $t\in\bbR$ be arbitrary. Then
	\begin{align*}
		\cH^{s_0-1}_r(X\cap\proj_{V(\bi)}^{-1}(t))&\leq\sum_{\substack{\bj\in\Gamma_r\\f_{\bj}(X)\cap\proj_{V(\bi)}^{-1}(t)\neq\emptyset}}|f_{\bj}(X)\cap\proj_{V(\bi)}^{-1}(t)|^{s_0-1}\\
		&\leq \sum_{\substack{\bj\in\Gamma_r\\f_{\bj}(X)\cap\proj_{V(\bi)}^{-1}(t)\neq\emptyset}}\|A_{\bj}|V(\aj\bi)^\perp\|^{s_0-1}|X|^{s_0-1}\\
		&\lesssim r^{-1}\sum_{\substack{\bj\in\Gamma_r\\f_{\bj}(X)\cap\proj_{V(\bi)}^{-1}(t)\neq\emptyset}}\alpha_1(A_\bj)\alpha_2(A_\bj)^{s_0-1}\\
		&{\lesssim r^{-1}(\proj_{V(\bi)})_*\pi_*\mu_K(B(t,r))}\leq C,
	\end{align*}
	where the last inequality follows by \eqref{it:boundeddens}. Since $r>0$ was arbitrary, we get that\linebreak $\cH^{s_0-1}(X\cap\proj_{V(\bi)}^{-1}(t))\leq C$ for every $\bi\in\Sigma$ and $t\in\bbR$.
\end{proof}

Finally, we show the equivalence of the positive measure with the uniformly bounded density of the projection of the K\"aenm\"aki measure.

\begin{proof}[Proof of \cref{thm:mainsep}]
The direction \eqref{it:posmeas2}$\Rightarrow$\eqref{it:boundeddens1} follows by \cref{thm:maincor}, so it is enough to show the implication \eqref{it:boundeddens1}$\Rightarrow$\eqref{it:massdist} of \cref{thm:main}.

For $r>0$, let us recall the definition of $\Delta_r$ from \cref{eq:Delta}. Let $x\in X$ be arbitrary. Then
\begin{align*}
	\pi_*\mu_K(B(x,r))&\leq\sum_{\substack{\bi\in\Delta_r\\f_{\bi}(X)\cap B(x,r)\neq\emptyset}}\pi_*\mu_K(B(x,r)\cap f_{\bi}(X))\\
	&\leq C \sum_{\substack{\bi\in\Delta_r\\f_{\bi}(X)\cap B(x,r)\neq\emptyset}}\alpha_1(A_\bi)\alpha_2(A_\bi)^{s_0-1}\pi_*\mu_K(f_{\bi}^{-1}(B(x,r)\cap f_{\bi}(X)))\\
	&\leq C \sum_{\substack{\bi\in\Delta_r\\f_{\bi}(X)\cap B(x,r)\neq\emptyset}}\alpha_1(A_\bi)\alpha_2(A_\bi)^{s_0-1}\pi_*\mu_K(\proj_{V(\ai\bj)}^{-1}(B(x,\frac{r}{\|A_{\bi}^*|V(\bj)\|})))\\
	&\leq C'\sum_{\substack{\bi\in\Delta_r\\f_{\bi}(X)\cap B(x,r)\neq\emptyset}}\alpha_1(A_\bi)r^{s_0-1}\frac{r}{\alpha_1(A_{\bi})}\leq C''r^{s_0}.
\end{align*}\end{proof}

\begin{proof}[Proof of \cref{cor}]
	Suppose that $\cH^{s_0}(X)>0$ and the IFS satisfies the bounded neighbourhood condition. By \cite[Proposition~3.1]{AnttilaBaranyKaenmaki24}, $$\dim_A X\leq 1+\max_{V\in X_F}\max_{t\in\proj_V(X)}\dim_H(X\cap\proj_V^{-1}(t)).$$ By \cref{thm:maincor}, $\dim_H(X\cap\proj_V^{-1}(t))\leq s_0-1$ for every $V\in X_F$ and $t\in\proj_V(X)$. Since $\dim_AX\geq \dimh X=s_0$, the claim follows.
\end{proof}

\section{Verification of the examples}\label{sec:examples}

Our final section is devoted to verify the examples presented in \cref{subsec:examples}. Our strategy is the following: we give conditions under which the planar system satisfies the strong separation condition and hence, the open bounded neighbourhood condition, and then we show that the projections of the K\"aenm\"aki measure along Furstenberg directions are absolutely continuous with continuous density. To show this, we borrow Fourier analytic methods from Feng and Feng \cite{FengFeng}.

For a Borel probability measure $\eta$ on $\bbR^d$, let us denote by $\widehat{\eta}\colon\bbR^d\to\bbC$ the Fourier transform of $\eta$, that is,
$$
\widehat{\eta}(\xi)=\int e^{i\langle\xi,x\rangle}d\eta(x).
$$
By \cite[Theorem~5.4]{Mattila15}, if there exists a $t>d$ such that
\begin{equation}\label{eq:enough}
\int|\widehat{\eta}(\xi)|^2\|\xi\|^td\xi<\infty
\end{equation}
then $\eta\ll\cL_d$ with continuous density.

\subsection{Diagonal example}

Before we verify \cref{ex:singlexf}, we need the following lemma. Although, we believe that this lemma is well-known, we could not find any proper reference.

\begin{lemma}\label{lem:fourier1}
	Let $\{x\mapsto c_ix+\tau_i\}_{i\in\cA}$ be a self-similar IFS on the real line with natural projection $\pi_\tau$ and let $(p_i)_{i\in\cA}$ be a probability vector and $\nu$ be the corresponding Bernoulli measure on $\Sigma$. If $\max_{i\in\cA}|c_i|<1/2$ and $\sum_{i\in\cA}\left(\frac{p_i}{|c_i|}\right)^2<1$ then the self-similar measure $\eta_\tau=(\pi_\tau)_*\nu$ is absolutely continuous with continuous density for Lebesgue-almost every $\tau:=(\tau_i)_{i\in\cA}$.
\end{lemma}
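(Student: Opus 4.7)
The approach is Fourier-analytic. By the criterion \eqref{eq:enough} with $d=1$, it suffices to show $\int_\bbR|\widehat{\eta_\tau}(\xi)|^2|\xi|^t d\xi<\infty$ for some $t>1$ and Lebesgue-a.e. $\tau\in\bbR^{\cA}$. Writing $\bbR^{\cA}=\bigcup_{R>0}[-R,R]^{\cA}$ and using Fubini, this follows from the uniform estimate
\[
\int_{[-R,R]^{\cA}}\int_\bbR|\widehat{\eta_\tau}(\xi)|^2|\xi|^t d\xi d\tau<\infty\qquad\text{for every }R>0.
\]

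Squaring the self-similar relation $\widehat{\eta_\tau}(\xi)=\sum_i p_i e^{i\xi\tau_i}\widehat{\eta_\tau}(c_i\xi)$, integrating in $\xi$ against $|\xi|^t d\xi$, and substituting $\eta=c_i\xi$ in each diagonal contribution produces the functional identity
\[
\cE(\tau)=A_t\,\cE(\tau)+\cR(\tau),\qquad A_t:=\sum_{i\in\cA}p_i^2|c_i|^{-(t+1)},
\]
with $\cE(\tau):=\int|\widehat{\eta_\tau}(\xi)|^2|\xi|^t d\xi$ and $\cR(\tau)$ the off-diagonal residue. The hypothesis $\sum_i(p_i/|c_i|)^2<1$ is exactly $A_1<1$, so by continuity of $t\mapsto A_t$ one may pick $t>1$ with $A_t<1$; after truncating to $|\xi|\le M$ (to keep $\cE$ finite), rearranging, and letting $M\to\infty$ by monotone convergence, the problem reduces to bounding $\int_{[-R,R]^{\cA}}|\cR(\tau)|\,d\tau$.

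Because $\pi_\tau$ is a linear function of $\tau$, expanding $\widehat{\eta_\tau}(c_i\xi)=\int_\Sigma e^{ic_i\xi\pi_\tau(\bi)}d\nu(\bi)$ and $\widehat{\eta_\tau}(c_j\xi)$ similarly, and interchanging with the $\tau$-integration by Fubini, each term $i\ne j$ in $\cR(\tau)$ becomes
\[
(2R)^{|\cA|}\iint_{\Sigma\times\Sigma}\prod_{l\in\cA}\Psi_R\bigl(\xi\gamma_l^{i,j}(\bi,\bj)\bigr)d\nu(\bi)d\nu(\bj),
\]
where $\Psi_R(s)=\sin(Rs)/(Rs)$ satisfies $|\Psi_R(s)|\le\min\{1,1/(R|s|)\}$ and $\gamma_l^{i,j}(\bi,\bj)$ is the coefficient of $\tau_l$ in $\tau_i-\tau_j+c_i\pi_\tau(\bi)-c_j\pi_\tau(\bj)$. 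A direct calculation shows that the two dominant coefficients satisfy $\gamma_i^{i,j}=1+O(r/(1-r))$ and $\gamma_j^{i,j}=-1+O(r/(1-r))$ with $r=\max_i|c_i|$; the assumption $r<1/2$ makes $r/(1-r)<1$, so on a subset of $\nu\otimes\nu$-measure bounded below these remain of order one. Splitting the $(\bi,\bj)$-integration further by the first disagreement time of the pair extracts a multiplicative common-prefix factor from every $\gamma_l$'s simultaneously, producing a geometric series in the prefix length whose summation, combined with the $\xi$-integration against $|\xi|^t$, is controlled precisely by $A_t<1$.

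The main obstacle lies in this off-diagonal estimate: the factors $\widehat{\eta_\tau}(c_i\xi)$ and $\widehat{\eta_\tau}(c_j\xi)$ both depend on all of $\tau$, so the $\tau$-integration cannot be carried out coordinate-by-coordinate until the Fubini interchange above opens up the $\nu$-integrals. Once this is done, the two hypotheses enter in complementary ways: $\max_i|c_i|<1/2$ controls the non-degeneracy of the dominant $\gamma_l^{i,j}$'s, while $\sum_i(p_i/|c_i|)^2<1$ provides the spectral gap $A_t<1$ for an admissible $t>1$, closing the recursion.
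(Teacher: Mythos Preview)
Your overall Fourier-analytic plan matches the paper's: both open $|\widehat{\eta_\tau}(\xi)|^2$ into a double $\nu$-integral, use the linearity $\pi_\tau(\bi)=\langle\tau,\Pi(\bi)\rangle$, and integrate in $\tau$ to produce decay in $\xi$. Your recursion $\cE=A_t\cE+\cR$ with $A_t=\sum_ip_i^2|c_i|^{-(t+1)}$ and the observation $A_1<1$ are correct, and the truncation-then-monotone-convergence step is fine (one only needs $\cE_{|c_i|M}\le\cE_M$).

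There is, however, a genuine gap in the off-diagonal estimate. After your one step of recursion the phase vector is exactly $\gamma^{i,j}(\bi,\bj)=\Pi(i\bi)-\Pi(j\bj)$, and since $i\ne j$ this already has zero common prefix; no further ``first disagreement'' splitting produces a prefix factor, and the sentence about a geometric series in the prefix length is misplaced (that mechanism belongs to the full expansion, where it accounts for the diagonal contribution you have already stripped off). More importantly, your sharp weight $\mathbf{1}_{[-R,R]^{\cA}}$ only yields $|\Psi_R(s)|\le\min(1,1/(R|s|))$, so the $\xi$-decay you can extract is at best $|\xi|^{-m}$ where $m$ is the number of coordinates $l$ with $|\gamma_l|$ bounded below. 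For $l\notin\{i,j\}$ one has $\gamma_l=c_i(\Pi(\bi))_l-c_j(\Pi(\bj))_l$, which vanishes whenever $\bi,\bj$ use only the symbols $i,j$; this event has positive $\nu\otimes\nu$-measure, and on it only two factors are useful, giving $O(|\xi|^{-2})$. Since you need $t>1$, the integral $\int|\xi|^t\cdot|\xi|^{-2}\,d\xi$ diverges, and the argument breaks.

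The paper avoids this by integrating against a smooth compactly supported density $\psi(\tau)$ whose Fourier transform satisfies $|\widehat\psi(\underline\xi)|\le C_N(1+\|\underline\xi\|)^{-N}$ for every $N$. Then the $\tau$-integral yields $\widehat\psi\bigl(\xi(\Pi(\bi)-\Pi(\bj))\bigr)$, and one only needs the \emph{norm} $\|\Pi(\bi)-\Pi(\bj)\|\gtrsim|c_{\bi\wedge\bj}|$ (which follows from $\max_i|c_i|<1/2$ via an $\ell^1$ estimate) rather than coordinate-wise lower bounds. The paper then integrates in $\xi$ and sums over the common prefix length, obtaining $\sum_k\bigl(\sum_i|c_i|^{-N}p_i^2\bigr)^k$ with $N>t+1$ chosen so the sum converges. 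Your recursive organisation would work equally well once the sharp cutoff is replaced by such a $\psi$: with $i\ne j$ the vector $\gamma^{i,j}$ has norm uniformly bounded below, so the off-diagonal residue is bounded by a constant, no further splitting needed.
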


For simplicity, let $a_{\bi}=a_{i_1}\cdots a_{i_n}$ for $\bi\in\Sigma_*$. Let us write $\pi_\tau$ for the natural projection of $\{x\mapsto c_ix+\tau_i\}_{i\in\cA}$. Then
$$
\pi_\tau(\bi)=\sum_{k=1}^\infty \tau_{i_k}c_{\bi|_{k-1}}=\sum_{j\in\cA}\tau_i\sum_{k=1}^\infty\delta_{i_k}^jc_{\bi|_{k-1}},
$$
where $\delta_i^j=1$ if $i=j$ and otherwise $0$. Let $\Pi(\bi)$ be the vector
$$
\Pi(\bi)=\left(\sum_{k=1}^\infty\delta_{i_k}^jc_{\bi|_{k-1}}\right)_{j\in\cA}.
$$
In particular, $\pi_\tau(\bi)=\langle\tau,\Pi(\bi)\rangle$, the scalar product of $\tau=(\tau_i)_{i\in\cA}$ and $\Pi(\bi)$.

It is easy to see that if $\bi\neq\bj\in\Sigma$ then
\begin{equation}\label{eq:trans}
\|\Pi(\bi)-\Pi(\bj)\|\geq |c_{\bi\wedge\bj}|\frac{1-2\max_i|c_i|}{1-\max_i|c_i|}>c |c_{\bi\wedge\bj}|.
\end{equation}

\begin{proof}
	Let $\widehat{\eta_\tau}(\xi)=\int e^{-i\xi\pi_\tau(\bi)}d\nu(\bi)$ be the Fourier transform of $\eta_\tau$. It is enough to verify \cref{eq:enough} for Lebesgue almost every $(\tau_i)_{i\in\cA}$. To show that, it is enough to verify that
	$$
	\iint|\widehat{\eta_\tau}(\xi)|^2|\xi|^td\xi\psi(\tau)d\tau<\infty
	$$
	for every compactly supported density function $\psi\colon\bbR^{\#\cA}\to[0,\infty)$ with Fourier transform $\widehat{\psi}$ satisfying that for every $N\geq1$ there exists a $C_N$ such that for every $\underline{\xi}\in\bbR^{\#\cA}$
	$$
	\widehat{\psi}(\underline{\xi})\leq\frac{C_N}{(1+\|\underline{\xi}\|)^N}.
	$$	
	
	Let us choose $t>1$ and $N>t+1$ such that $\sum_{i\in\cA}|c_i|^{-N}p_i^2<1$. Then
	\begin{align*}
		\left|\iint|\widehat{\eta_\tau}(\xi)|^2|\xi|^td\xi\psi(\tau)d\tau\right|&=\left|\iint\iint e^{i\xi(x-y)}|\xi|^t\psi(\tau)d\eta_\tau(x)d\eta_\tau(y) d\tau d\xi\right|\\
		&=\left|\iint\iint e^{i\xi\langle \tau,\Pi(\bi)-\Pi(\bj)\rangle}\psi(\tau)d\tau|\xi|^td\nu(\bi)d\nu(\bj)d\xi\right|\\
		&=\left|\iiint \widehat{\psi}(\xi\cdot(\Pi(\bi)-\Pi(\bj)))|\xi|^td\nu(\bi)d\nu(\bj)d\xi\right|\\
		&\leq \iiint |\widehat{\psi}(\xi\cdot(\Pi(\bi)-\Pi(\bj)))||\xi|^td\nu(\bi)d\nu(\bj)d\xi\\
		&\leq \iiint \frac{C_N|\xi|^t}{(1+|\xi|\|\Pi(\bi)-\Pi(\bj)\|)^N}d\nu(\bi)d\nu(\bj)d\xi\\
		&\lesssim \iint |c_{\bi\wedge\bj}|^{-N}d\nu(\bi)d\nu(\bj)\int\frac{C_N|\xi|^t}{(1+|\xi|)^N}d\xi\text{ by \cref{eq:trans}}\\
		&\leq \sum_{k=0}^\infty \left(\sum_{i\in\cA}|c_i|^{-N}p_i^2\right)^k\cdot \int\frac{C_N|\xi|^t}{(1+|\xi|)^N}d\xi,
	\end{align*}
which is finite by the choice of $N$ and $t$.
\end{proof}

\begin{proposition}\label{prop:ex1}
	Let $\cA$ be a finite set of indices and for every $i\in\cA$, let $0<|a_i|<|c_i|<1/2$ such that $\sum_{i\in\cA}|c_i||a_i|^{1/4}>1$ and $\sum_{i\in\cA}|a_i|^{1/2}<1$. Let
	\begin{equation}\label{eq:ex1}
		\Phi=\left\{f_i(x)=\begin{pmatrix}
			a_i & 0 \\ 0 & c_i
		\end{pmatrix}x+\begin{pmatrix}t_{i,1}\\t_{i,2}\end{pmatrix}\right\}_{i\in\cA}.
	\end{equation}
	and denote $X$ the attractor of $\Phi$. Then $0<\cH^{s_0}(X)<\infty$ for Lebesgue-almost every $(t_{i})_{i\in\cA}\in\bbR^{2\#\cA}$, where $\sum_{i\in\cA}|c_i||a_i|^{s_0-1}=1$.
\end{proposition}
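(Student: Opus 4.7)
The plan is to verify the hypotheses of \cref{thm:mainsep} for a generic choice of translation parameters and then apply that theorem. The argument has three components: identifying the K\"aenm\"aki measure and the Furstenberg direction, establishing separation, and controlling the projected measure via \cref{lem:fourier1}.

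The diagonal linear parts commute and satisfy $|a_i|<|c_i|$, so domination is immediate. The Furstenberg set collapses to the singleton $X_F=\{V_0\}$ with $V_0=e_2$, since iterating $A_\bi^*=\mathrm{diag}(a_\bi,c_\bi)$ squeezes any invariant multicone around $e_2$ onto $e_2$. The singular value function factors as $\varphi^{s_0}(A_\bi)=|c_\bi||a_\bi|^{s_0-1}=\prod_j p_{i_j}$ with $p_i:=|c_i||a_i|^{s_0-1}$, so $\mu_K$ is the Bernoulli measure on $\Sigma$ with probability vector $(p_i)$. Consequently $(\proj_{V_0})_*\pi_*\mu_K$ is the self-similar measure on $\bbR$ driven by $\{y\mapsto c_iy+t_{i,2}\}_{i\in\cA}$ with weights $p_i$.

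For separation, $|a_i|<1$ forces $\sum_i|a_i|<\sum_i|a_i|^{1/2}<1$, so the $x$-intervals $t_{i,1}+[0,a_i]$ are pairwise disjoint for $(t_{i,1})_{i\in\cA}$ in a non-empty open subset $\cU\subseteq\bbR^{\#\cA}$; for such choices the rectangles $f_i(X)$ are pairwise disjoint, yielding SSC for $\Phi$ and hence the OBNC. For the Fourier step, $s\mapsto\sum_i|c_i||a_i|^s$ is strictly decreasing, equals $1$ at $s=s_0-1$, and by hypothesis exceeds $1$ at $s=1/4$; hence $s_0-1>1/4$, and therefore
\[
\sum_{i\in\cA}\Bigl(\tfrac{p_i}{|c_i|}\Bigr)^{\!2}=\sum_{i\in\cA}|a_i|^{2(s_0-1)}\le\sum_{i\in\cA}|a_i|^{1/2}<1.
\]
Combined with $\max_i|c_i|<1/2$, this is precisely the hypothesis of \cref{lem:fourier1} applied with $\tau_i=t_{i,2}$; the lemma then gives that $(\proj_{V_0})_*\pi_*\mu_K$ is absolutely continuous with continuous (hence bounded, by compactness of the support) density for Lebesgue-a.e.\ $(t_{i,2})_{i\in\cA}\in\bbR^{\#\cA}$.

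Item \eqref{it:boundeddens1} of \cref{thm:mainsep} is thereby satisfied, so that theorem yields $\cH^{s_0}(X)>0$. Combining the two generic conditions via Fubini produces this conclusion for Lebesgue-a.e.\ $(t_i)_{i\in\cA}$ in any bounded region of parameter space (after choosing $(t_{i,1})\in\cU$), while the bound $\cH^{s_0}(X)<\infty$ is automatic from the K\"aenm\"aki estimate \cite[Lemma~2.18]{BaranyKaenmakiYu2021-preprint}. The delicate point is the arithmetic threshold $s_0-1>1/4$: it is exactly this inequality, forced by $\sum_i|c_i||a_i|^{1/4}>1$, that brings the quadratic Fourier sum below $1$ when combined with $\sum_i|a_i|^{1/2}<1$, and this is why both hypotheses of the example appear rather than a single one.
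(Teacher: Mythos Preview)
Your overall strategy matches the paper's exactly: identify $\mu_K$ as the Bernoulli measure with weights $p_i=|c_i||a_i|^{s_0-1}$, note that $X_F$ is a singleton so only one projection needs to be controlled, use \cref{lem:fourier1} for the projected measure, and conclude via \cref{thm:mainsep}. The arithmetic step $s_0-1>1/4\Rightarrow\sum_i|a_i|^{2(s_0-1)}\le\sum_i|a_i|^{1/2}<1$ is exactly what the paper does.

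The gap is in the separation step. You only show that the strong separation condition holds for $(t_{i,1})$ in some non-empty open set $\cU\subseteq\bbR^{\#\cA}$, and you acknowledge this at the end (``after choosing $(t_{i,1})\in\cU$''). But the proposition asserts $\cH^{s_0}(X)>0$ for Lebesgue-almost every $(t_i)\in\bbR^{2\#\cA}$, so you would need the OBNC for almost every $(t_{i,1})$, not just on an open set of positive but not full measure. The observation $\sum_i|a_i|<1$ is enough to produce \emph{some} open set where the first-level $x$-intervals are disjoint, but it says nothing about generic parameters outside that set; for arbitrary $(t_{i,1})$ the first-level intervals may well overlap, and your argument gives no control there.

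The paper closes this gap by invoking a transversality result of Rams and V\'ehel \cite[Theorem~1.1]{RamsVehel}: the hypothesis $\sum_{i\in\cA}|a_i|^{1/2}<1$ guarantees that the one-dimensional IFS $\{x\mapsto a_ix+t_{i,1}\}_{i\in\cA}$ satisfies the strong separation condition for Lebesgue-almost every $(t_{i,1})\in\bbR^{\#\cA}$. This is the reason the exponent $1/2$ appears in the hypothesis rather than the weaker $\sum_i|a_i|<1$; the stronger assumption is doing double duty, feeding both the Fourier estimate (via $2(s_0-1)>1/2$) and the almost-sure separation. With Rams--V\'ehel in hand, Fubini then gives the full statement for a.e.\ $(t_i)\in\bbR^{2\#\cA}$.
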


\begin{proof}
Let $\mu_K$ be the K\"aenm\"aki measure corresponding to the system defined in \cref{eq:ex1}. It is easy to see that for every $\bi\in\Sigma_*$
$$
\mu_K([\bi])=|c_{\bi}||a_{\bi}|^{s_0-1}.
$$
For a proof, see for example \cite{FalconerMiao2007}.

Clearly, $5/4<s_0<3/2$. By the construction, $X_F$ is a singleton containing the direction of the $x$-axis. By the assumption $\sum_{i\in\cA}|a_i|^{1/2}<1$ the result of Rams and V\'ehel \cite[Theorem~1.1]{RamsVehel}, the IFS $\{y\mapsto a_iy+t_{i,1}\}_{i\in\cA}$ satisfies the strong separation condition for Lebesgue almost every $(t_{i,1})_{i\in\cA}$, and so does $\Phi$. On the other hand,
$$
\sum_{i\in\cA}\frac{(|c_i||a_i|^{s_0-1})^2}{|c_i|^2}=\sum_{i\in\cA}|a_i|^{2(s_0-1)}\leq \sum_{i\in\cA}|a_i|^{1/2}<1,
$$
and so, by \cref{lem:fourier1}, the projection of the K\"aenm\"aki measure is absolute continuous with continuous (and thus, bounded) density for Lebesgue almost every $(t_{i,2})_{i\in\cA}$. Then the claim follows by \cref{thm:mainsep}.
\end{proof}

\subsection{Example with positive dimensional Furstenberg directions}

In this section, we consider a dominated example with triangular linear parts for which the Furstenberg measure is supported on a Cantor set.

\begin{proposition}\label{prop:ex2}
	Let
\begin{equation}\label{eq:ex2b}
	\Phi=\left\{f_i(x)=\begin{pmatrix}
		a_i & 0 \\ b_i & c_i
	\end{pmatrix}x+\begin{pmatrix}t_{i,1}\\t_{i,2}\end{pmatrix}\right\}_{i\in\cA}
\end{equation}
be an IFS such that $0<|a_i|<|c_i|<1/2$, $\sum_{i\in\cA}|c_i|>1$, and the linear parts are not simultaneously diagonalisable. Furthermore, suppose that the IFS $\Phi_1=\{x\to a_ix+t_{i,1}\}_{i\in\cA}$ satisfies the strong open set condition. Denote $s_0$ the affinity dimension $\sum_{i\in\cA}|c_i||a_i|^{s_0-1}=1$, $s_0\in(1,2]$. If $\sum_{i\in\cA}|c_i|^{-1}|a_i|^{2(s_0-1)}<1$ then $0<\cH^{s_0}(X)<\infty$ for Lebesgue-almost every $\tau=(t_{i,2})_{i\in\cA}$, where $X$ is the attractor of $\Phi$.
\end{proposition}

Let $\Pi_\tau\colon\bi\mapsto(\pi^1(\bi),\pi^2_\tau(\bi))$ be the natural projection for the IFS $\Phi$. Simple calculation shows that
\begin{equation}\label{eq:coordproj}
\pi^1(\bi)=\sum_{k=1}^\infty t_{i_k,1}a_{\bi|_{k-1}},\text{ and let }\pi_\tau^2(\bi)=\sum_{k=1}^\infty\left(t_{i_k,2}+b_{i_k}\pi^1(\sigma^k\bi)\right)c_{\bi|_{k-1}},
\end{equation}
In particular, $\pi^1\colon\Sigma\to\bbR$ is the natural projection of the IFS $\Phi_1$. Let $\mu_K$ be the K\"aenm\"aki measure, and again by \cite{FalconerMiao2007},
$$
\mu_K([\bi])=|c_{\bi}||a_{\bi}|^{s_0-1}\text{ for every }\bi\in\Sigma_*.
$$
Let us also introduce the natural projection of the IFS $\Phi_2=\{x\to c_ix+t_{i,2}\}_{i\in\cA}$, and let us denote it by
$$
\widetilde{\pi}^2_\tau(\bi)=\sum_{k=1}^\infty t_{i_k,2}c_{\bi|_{k-1}}.
$$
Similarly to the previous case, one can write
$$
\widetilde{\Pi}(\bi)=\left(\sum_{k=1}^\infty\delta_{i_k}^j c_{\bi|_{k-1}}\right)_{j\in\cA},
$$
and $\widetilde{\pi}^2_\tau(\bi)=\langle\tau,\widetilde{\Pi}(\bi)\rangle$. Since $|c_i|<1/2$
\begin{equation}\label{eq:trans2}
	\|\widetilde{\Pi}(\bi)-\widetilde{\Pi}(\bj)\|\geq |c_{\bi\wedge\bj}|\frac{1-2\max_i|c_i|}{1-\max_i|c_i|}>C |c_{\bi\wedge\bj}|.
\end{equation}

With a slight abuse of notation, let $\proj_v(x,y)=y-vx$ for a $v\in\bbR$. So, $\proj_v$ is bi-Lipschitz equivalent to the orthogonal projection to the line $\mathrm{span}\binom{-v}{1}$. It is easy to see that there exists $C>0$ such that the projective interval $$
\cC=\left\{\mathrm{span}\begin{pmatrix}
	v\\ 1
\end{pmatrix}:|v|\leq C\right\}
$$
is invariant with respect to the matrices $A_i^*$. Let $h\colon\bbR\to[0,\infty)$ be a compactly supported continuous density function such that { $\inf_{x\in[-C,C]}h(x)>0$} and for every $M\geq1$ there exists $C_M>0$ such that
\begin{equation}\label{eq:h}
|\widehat{h}(\xi)|\leq\frac{C_M}{(1+|\xi|)^M}\text{ for every $\xi\in\bbR$},
\end{equation}
where $\widehat{h}$ is the Fourier transform of $h$.

\begin{proof}[Proof of \cref{prop:ex2}]
Let us define a compactly supported probability measure $\nu_\tau$ on $\bbR^2$ by
$$
d\nu_\tau(x,y)=h(x)d(\proj_x)_*(\Pi_\tau)_*\mu_K(y)dx.
$$
It is sufficient to show that $\nu_\tau$ is absolutely continuous with continuous density. Indeed, since $h(x)$ is uniformly separated away from zero on $[-C,C]\supseteq X_F$, if $d\nu_\tau(x,y)=g_\tau(x,y)dxdy$ with $g_{\tau}\colon\bbR^2\to[0,\infty)$ continuous, then the measure $(\proj_x)_*(\Pi_\tau)_*\mu_K$ is absolutely continuous with continuous density $g_\tau(x,y)/h(x)$, which is uniformly bounded. This verifies \eqref{it:boundeddens1} of \cref{thm:mainsep}.

By \cref{eq:enough}, it is enough to show for some $t>2$ that
$$
\iiint|\widehat{\nu_\tau}(\xi_1,\xi_2)|^2\|(\xi_1,\xi_2)\|^td\xi_1d\xi_2\psi(\tau)d\tau<\infty
$$
for every compactly supported density function $\psi\colon\bbR^{\#\cA}\to[0,\infty)$ with Fourier transform $\widehat{\psi}$ satisfying that for every $N\geq1$ there exists a $C_N$ such that for every $\underline{\xi}\in\bbR^{\#\cA}$
\begin{equation}\label{eq:psi}
\widehat{\psi}(\underline{\xi})\leq\frac{C_N}{(1+\|\underline{\xi}\|)^N}.
\end{equation}

By definition
$$
\proj_x(\Pi_\tau(\bi))=\pi_\tau^2(\bi)-x\pi^1(\bi)=\langle\tau,\widetilde{\Pi}(\bi)\rangle-x\pi^1(\bi)+\sum_{k=1}^\infty b_{i_k}\pi^1(\sigma^k\bi)c_{\bi|_{k-1}}.
$$
Let us choose $t>2$ and $N,M>t+1$ such that $\sum_{i\in\cA}|c_i|^{2-N}|a_i|^{2(s_0-1)}<1$. Simple algebraic manipulations show that
\begin{align*}
	&\left|\iiint|\widehat{\nu_\tau}(\xi_1,\xi_2)|^2\|(\xi_1,\xi_2)\|^td\xi_1d\xi_2\psi(\tau)d\tau\right|\\
	&=\left|\iint\|(\xi_1,\xi_2)\|^t\int\iint\iint e^{i\xi_1(x-y)+i\xi_2\left(\proj_x(\Pi_\tau(\bi))-\proj_y(\Pi_\tau(\bj))\right)}h(x)h(y)\psi(\tau)d\mu_K(\bi)d\mu_K(\bj)dxdyd\tau d\xi_1d\xi_2\right|\\
	&=\left|\iint\|(\xi_1,\xi_2)\|^t\int\iint\iint e^{ix\left(\xi_1-\xi_2\pi^1(\bi)\right)+y\left(\xi_2\pi^1(\bj)-\xi_1\right)+i\xi_2\left(\pi_\tau^2(\bi)-\pi_\tau^2(\bj)\right)}h(x)h(y)\psi(\tau)dxdyd\mu_K(\bi)d\mu_K(\bj)d\tau d\xi_1d\xi_2\right|\\
	&=\left|\iint\|(\xi_1,\xi_2)\|^t\int\iint\widehat{h}\left(\xi_1-\xi_2\pi^1(\bi)\right)\widehat{h}\left(\xi_2\pi^1(\bj)-\xi_1\right) e^{i\xi_2\left(\pi_\tau^2(\bi)-\pi_\tau^2(\bj)\right)}\psi(\tau)d\mu_K(\bi)d\mu_K(\bj)d\tau d\xi_1d\xi_2\right|\\
	&\leq\iint\|(\xi_1,\xi_2)\|^t\iint\left|\widehat{h}\left(\xi_1-\xi_2\pi^1(\bi)\right)\right|\left|\widehat{h}\left(\xi_2\pi^1(\bj)-\xi_1\right)\right|\left|\int e^{i\xi_2\left(\pi_\tau^2(\bi)-\pi_\tau^2(\bj)\right)}\psi(\tau)d\tau\right|d\mu_K(\bi)d\mu_K(\bj)d\xi_1d\xi_2\\
	&=\iint\|(\xi_1,\xi_2)\|^t\iint\left|\widehat{h}\left(\xi_1-\xi_2\pi^1(\bi)\right)\right|\left|\widehat{h}\left(\xi_2\pi^1(\bj)-\xi_1\right)\right|\left|\int e^{i\xi_2\langle\tau,\widetilde{\Pi}(\bi)-\widetilde{\Pi}(\bj)\rangle}\psi(\tau)d\tau\right|d\mu_K(\bi)d\mu_K(\bj)d\xi_1d\xi_2\\
	&=\iint\|(\xi_1,\xi_2)\|^t\iint\left|\widehat{h}\left(\xi_1-\xi_2\pi^1(\bi)\right)\right|\left|\widehat{h}\left(\xi_2\pi^1(\bj)-\xi_1\right)\right|\left|\widehat{\psi}\left(\xi_2(\widetilde{\Pi}(\bi)-\widetilde{\Pi}(\bj)\right)\right|d\mu_K(\bi)d\mu_K(\bj)d\xi_1d\xi_2\\
	\intertext{by using \cref{eq:psi} and \cref{eq:h}}
	&\leq\iint\|(\xi_1,\xi_2)\|^t\iint\frac{C_N\left|\widehat{h}\left(\xi_1-\xi_2\pi^1(\bi)\right)\right|\left|\widehat{h}\left(\xi_2\pi^1(\bj)-\xi_1\right)\right|}{(1+|\xi_2|\|\widetilde{\Pi}(\bi)-\widetilde{\Pi}(\bj)\|)^N}d\mu_K(\bi)d\mu_K(\bj)d\xi_1d\xi_2\\
	&\leq\iint\iint\frac{C_NC_M\|(\xi_1,\xi_2)\|^t}{(1+|\xi_1-\xi_2\pi^1(\bi)|)^M(1+|\xi_2|\|\widetilde{\Pi}(\bi)-\widetilde{\Pi}(\bj)\|)^N}d\xi_1d\xi_2d\mu_K(\bi)d\mu_K(\bj)\\
	&\leq\iint\iint\frac{C_NC_M\|(\xi_1,\xi_2)\|^t}{(1+|\xi_1-\xi_2\pi^1(\bi)|)^M(1+|\xi_2|)^N\|\widetilde{\Pi}(\bi)-\widetilde{\Pi}(\bj)\|^N}d\xi_1d\xi_2d\mu_K(\bi)d\mu_K(\bj)\\
	\intertext{by using the coordinate change $\xi_1'=\xi_1-\pi^1(\bi)\xi_2$ and $\xi_2'=\xi_2$, observe that $\|(\xi_1'+\pi^1(\bi)\xi_2',\xi_2')\|^t\leq\left(\|(\xi_1',\xi_2')\|+|\pi^1(\bi)||\xi_2'|\right)^t\leq2^t\|(\xi_1',\xi_2')\|^t$, and so}
	&=C_NC_M\iint\frac{\|(\xi_1',\xi_2')\|^t}{(1+|\xi_1'|)^M(1+|\xi_2'|)^N}d\xi_1'd\xi_2'\iint\|\widetilde{\Pi}(\bi)-\widetilde{\Pi}(\bj)\|^{-N}d\mu_K(\bi)d\mu_K(\bj)\\
	&\lesssim\iint\frac{\|(\xi_1',\xi_2')\|^t}{(1+|\xi_1'|)^M(1+|\xi_2'|)^N}d\xi_1'd\xi_2'\sum_{k=1}^\infty \left(\sum_{i\in\cA}|c_i|^{2-N}|a_i|^{2(s_0-1)}\right)^k,
\end{align*}
where in the last step, we applied \cref{eq:trans2}. Now, the right-hand side is finite by the choice of parameters, $t,N$ and $M$.
\end{proof}

\section*{Acknowledgement} The author is grateful to Antti K\"aenm\"aki for the useful discussions and his valuable comments. The author further thanks the anonymous referee for their careful reading and valuable suggestions.

\bibliographystyle{abbrv}
\bibliography{Bibliography}

\end{document}